\numberwithin{equation}{section}
\newtheorem{theorem}[equation]{Theorem}
\newtheorem{corollary}[equation]{Corollary}
\newtheorem{claim}[equation]{Claim}
\newtheorem{lemma}[equation]{Lemma}
\newtheorem{proposition}[equation]{Proposition}
\newtheorem{ass}[equation]{Assumptions}
\theoremstyle{definition}
\newtheorem{definition}[equation]{Definition}
\newtheorem{remark}[equation]{Remark}
\newtheorem{example}[equation]{Example}
\theoremstyle{remark}
\newcommand{\C}{\mathbb{C}}
\newcommand{\ZZ}{\mathbb{Z}}
\newcommand{\G}{\mathbb{G}}
\newcommand{\Pp}{\mathbb{P}}
\newcommand{\Ff}{\mathcal{F}}
\newcommand{\Ef}{\mathcal{E}}
\newcommand{\HH}{\mathcal{H}}
\newcommand{\N}{\mathcal{N}}
\newcommand{\M}{\mathcal{M}}
\newcommand{\T}{\mathcal{T}}
\newcommand{\Oc}{\mathcal{O}}
\newcommand{\g}{\mathfrak{g}}
\begin{document}
\title[Special scrolls with general moduli]%
{Special scrolls whose base curve has general moduli}%
\author{A. Calabri, C. Ciliberto, F. Flamini, R. Miranda}

\email{calabri@dmsa.unipd.it}
\curraddr{DMMMSA, Universit\`a degli Studi di Padova\\
Via Belzoni, 7 - 35131 Padova \\Italy}

\email{cilibert@mat.uniroma2.it} \curraddr{Dipartimento di
Matematica, Universit\`a degli Studi di Roma Tor Vergata\\ Via
della Ricerca Scientifica - 00133 Roma \\Italy}

\email{flamini@mat.uniroma2.it} \curraddr{Dipartimento di
Matematica, Universit\`a degli Studi di Roma Tor Vergata\\ Via
della Ricerca Scientifica - 00133 Roma \\Italy}

\email{Rick.Miranda@ColoState.edu} \curraddr{Department of
Mathematics, 101 Weber Building, Colorado State University
\\ Fort Collins, CO 80523-1874 \\USA}

\thanks{{\it Mathematics Subject Classification (2000)}: 14J26, 14C05, 14H60;
(Secondary) 14D06, 14D20. \\ {\it Keywords}: ruled
surfaces; unisecants and sections; Hilbert schemes of scrolls; Moduli;
\\
The first three authors are members of G.N.S.A.G.A.\ at
I.N.d.A.M.\ ``Francesco Severi''.}

\begin{abstract} In this paper we study the Hilbert scheme of smooth, linearly normal, special scrolls under suitable
assumptions on degree, genus and speciality.
\end{abstract}

\maketitle


\section{Introduction}\label{S:Intro}

The classification of scroll surfaces in a projective space is a classical subject. Leaving aside
the prehistory, the classical reference is C. Segre \cite{Seg}. In this paper, Segre
sets the foundations of the theory and proves a number of basic results concerning
the family of unisecant curves of sufficiently general scrolls. Though important, Segre's results
are far from  being exhaustive and even satisfactory. This was already clear to F. Severi who, in \cite{Sev},
makes some criticism to some points of Segre's treatment,  poses
a good number of interesting questions concerning the classification of
families of scrolls  in a projective space, and proves some partial results on this subject.  In particular,
in the last two or three sections of his paper, Severi focuses on the study of what today we call the
{\em Hilbert scheme} of scrolls and the related map to the moduli space of curves.
Severi's paper does not seem to have received too much attention in modern times. By contrast
Segre's approach has been reconsidered and improved in recent times by various authors (for a non comprehensive
list, see the references).

The present paper has to be seen as a continuation of \cite{CCFMLincei}, \cite{CCFMnonsp} and
\cite{CCFMBN}. In \cite{CCFMLincei}, inspired by some papers of G. Zappa (cf. \cite{Zap1}, \cite{Zapp}) in turn motivated
by Severi, we studied the Hilbert scheme of linearly normal,
non-special scrolls and some of their degenerations. Recall
that a scroll $S$ is said to be {\em special} if  $h^1(\Oc_S(1)) >0$ and {\em non-special}
otherwise (cf. Definition \ref{def:spec}); the number $h^1(\Oc_S(1)) $ is called the {\em speciality} of the
scroll. The general linearly normal,
non-special scroll corresponds to a stable, rank-two
vector bundle on a curve with general moduli
(see \cite{CCFMnonsp}, cf.\ also \cite{APS}).  This and the degeneration techniques in \cite{CCFMLincei} enabled us to
reconstruct and improve some enumerative results of Segre, reconsidered also by Ghione (cf. \cite{Ghio}).
The same techniques turn out to be very effective in studying the Brill-Noether theory of sub-line bundles of
a stable, rank-two vector bundle on a curve with general moduli, which translates into the study of families of unisecants
of the corresponding scroll (cf. \cite{CCFMBN}).

In the present paper, we focus on the study of the Hilbert scheme of linearly normal,
special scrolls. In contrast with what happens
for non-special scrolls, the special ones, in the ranges considered by us for the degree and genus,
always correspond to unstable --- even sometimes decomposable --- vector bundles (cf. Proposition \ref{prop:cases} and Remark \ref{rem:conclusion}, cf. also \cite{GP2}).

Our main result is Theorem \ref{thm:hilschh1}, in which we give, under suitable assumptions
on degree $d$, genus $g$ and speciality $h^1$, the full description of all
components of the Hilbert scheme of smooth, linearly normal, special scrolls whose base curve
has general moduli. With a slight abuse of terminology, we will call such scrolls,
{\em scrolls with general moduli}. Similarly, we will talk about {\em scrolls with special moduli}.

All components of the Hilbert scheme of special scrolls with general moduli
turn out to be generically smooth and we compute their dimensions. If the speciality
is 1, there is a unique such component. Otherwise, there are several components, each one determined by
the minimum degree $m$ of a section contained by the general surface of the component. If the speciality is 2,
all the components have the same dimension; if the speciality is larger than 2,
the components have different dimensions depending on $m$: the larger $m$ is the smaller is the dimension of the component.
Sections \ref{ProjDege}, \ref{SSGM} and \ref{S:HSS} are devoted to the proof of this theorem
under suitable assumptions on $d $.

These results rely on the existence of a unique special section, which is the section of minimal
degree and of the same speciality of the scroll in the range of interest for us.
If $d \geq 4g-2$, this is one of the main results by Segre in \cite{Seg}, which relies on a smart
projective geometric argument, and could be easily
exposed in modern terminology. However, we will use the more recent results in \cite{GP3} and \cite{GP4},
which ensure the existence and uniqueness of the special section in a wider range than Segre's.

Section \ref{S:considerations} contains further remarks on the Hilbert schemes. First we prove
that in most cases projections of linearly normal, special scrolls to lower dimensional projective spaces
fill up, rather unexpectedly, irreducible components of the Hilbert schemes (see
Proposition \ref{prop:ciropasqua}; see also \cite{Ballico2}). Moreover, we construct irreducible components of the Hilbert scheme
parametrizing scrolls with  special moduli (cf. Example \ref{ex:specmod}). Our construction
provides all components of the Hilbert scheme of scrolls of speciality 2, regardless to their
number of moduli (cf. Proposition \ref{prop:h1=2}). Finally, in
Propositions \ref{prop:16apr} and \ref{prop:16aprb}, we provide examples of smooth, linearly
normal, special scrolls which correspond to singular points of their Hilbert schemes
(cf. also Remark \ref{rem:rinvio}).

There are, in our opinion, three main subjects of interest in this field which have not been treated in this paper:
(i) degenerations, (ii) Brill-Noether theory, (iii) special scrolls of {\em very low} degree, in particular special scrolls
corresponding to stable vector bundles. We believe that the first two subjects can be attacked with techniques
similar to the ones in \cite{CCFMLincei}, \cite{CCFMnonsp} and \cite{CCFMBN}. We hope to come back to this in the future.
The last question is very intriguing and certainly requires an approach different
from the one in this paper. Related results can be found in \cite{TB} (cf. also the expository paper
\cite{Tex}).

\section{Notation and preliminaries}\label{S:NotPre}

Let $C$ be a smooth, irreducible, projective curve
of genus $g$ and let $F \stackrel{\rho}{\to} C $ be a {\em
geometrically ruled surface} on $C$, i.e.\ $F = \Pp(\Ff)$, where $\Ff$ is a
rank-two vector bundle, equivalently a locally free sheaf, on $C$. We will set
$d:=  \deg(\Ff) = \deg(\det(\Ff))$.

As in \cite{CCFMnonsp}, we shall make the following:

\begin{ass}\label{ass:1} With notation as above,
\begin{itemize}
\item [(1)] $h^0(C, \Ff) = R+1$, with $R \geq 3$;
\item[(2)] the complete linear system $|\Oc_F(1)|$ is base-point-free and the morphism
$\Phi: F \to \Pp^R$ induced by $|\Oc_F(1)|$ is birational to its image.
\end{itemize}
\end{ass}

\begin{definition}\label{def:scroll} The surface
$$\Phi(F) :=S \subset \Pp^R$$is said to be a {\em scroll of degree
$d$ and of (sectional) genus $g$} on $C$ and $F$ is its minimal desingularization; $S$ is called the {\em
scroll determined by the pair} $(\Ff, C)$. Note that $S$ is smooth if and only if $\Ff$ is very ample.

For any $x \in C$, let $f_x := \rho^{-1}(x) \cong \Pp^1$;
the line $l_x := \Phi(f_x)$ is called a {\em ruling} of
$S$. Abusing terminology, the
family $\{ l_x \}_{ x \in C}$ is also called the {\em ruling} of $S$.
\end{definition}

For further details on ruled surfaces and on their projective geometry, we refer the reader to
\cite{GH}, \cite[\S\,V]{Ha}, and the references in \cite{CCFMnonsp}.

Let $F \stackrel{\rho}{\to} C$ be as above. Then, there is a section $i : C \hookrightarrow F$,
whose image we denote by $H$, such that $\Oc_F(H) = \Oc_F(1)$. Then
$${\rm Pic}(F) \cong \ZZ[\Oc_F(H)] \oplus \rho^*({\rm Pic}(C)).$$Moreover,
$${\rm Num} (F) \cong \ZZ \oplus \ZZ,$$generated by the classes of $H$ and $f$, satisfying $Hf=1$, $f^2 = 0$
(cf.\ \cite[\S\,5, Prop.\ 2.3]{Ha}). If
$\underline{d}\in {\rm Div}(C)$,  we will denote by $\underline{d} f$ the divisor $\rho^*(\underline{d})$
on $F$. A similar notation will be used when $\underline{d}\in {\rm Pic}(C)$.
Any element of ${\rm Pic}(F)$ corresponds to a divisor on $F$
of the form$$nH + \underline{d} f, \; n \in \ZZ,  \; \underline{d}\in {\rm Pic}(C).$$As an element of
${\rm Num}(F)$ this is
$$nH + d f, \; n \in \ZZ, \; d = {\rm deg}(\underline{d}) \in \ZZ.$$We will denote by $\sim$ the linear
equivalence and by $\equiv$ the numerical equivalence.

\begin{definition}\label{def:unisec} For any $\underline{d} \in {\rm Div}(C)$ such that $|H + \underline{d} f| \neq \emptyset$, any $B \in |H + \underline{d} f|$ is called a {\em unisecant curve} to
the fibration $F \stackrel{\rho}{\to} C$ (or simply of $F$).
Any irreducible unisecant curve $B$ of $F$ is smooth and is called a {\em section} of $F$.
\end{definition}

There is a one-to-one correspondence between sections $B$ of $F$ and surjections
$\Ff \to\!\!\!\to L$, with $L =L_B$ a line bundle on $C$ (cf.\ \cite[\S\;V, Prop.\ 2.6 and 2.9]{Ha}).
Then, one has an exact sequence
\begin{equation}\label{eq:Fund}
0 \to N \to \Ff \to L \to 0,
\end{equation}where $N$ is a line bundle on $C$. If $L =\Oc_C( \underline{m}) $, with
$\underline{m} \in {\rm Div}^m(C)$, then $m = H B$ and $B \sim H + ( \underline{m} - \det(\Ff)) f $.

For example, if $B \in |H|$, the associated exact sequence is
$$0 \to \Oc_C \to \Ff \to \det(\Ff) \to 0,$$
where the map $\Oc_C \hookrightarrow \Ff$ gives a global section
of $\Ff$ corresponding to the global section of $\Oc_F(1)$ vanishing on $B$.

With $B$ and $F$ as in Definition \ref{def:unisec}, from \eqref{eq:Fund}, one has
\begin{equation}\label{eq:Ciro410b}
\Oc_B(B) \cong N^{\vee} \otimes L
\end{equation} (cf.\ \cite[\S\,5]{Ha}). In particular,
$$B^2 = \deg(L) - \deg(N) = d - 2 \, \deg(N) = 2m - d.$$

One has a similar situation if $B_1$ is a (reducible) unisecant curve of $F$ such that $H B_1 = m$.
Indeed, there is a section $B \subset F$ and an effective divisor $\underline{a} \in {\rm Div}(C)$,
$ a:= \deg(\underline{a})$, such that$$B_1 = B + \underline{a} f,$$where $BH = m-a$. In particular there is a
line bundle $L = L_B$ on $C$, with $\deg(L) = m-a$, fitting in the exact sequence \eqref{eq:Fund}.
Thus, $B_1$ corresponds to the exact sequence
\begin{equation}\label{eq:Fund2}
0 \to N \otimes \Oc_C( - \underline{a}) \to \Ff \to L \oplus \Oc_{\underline{a}} \to 0
\end{equation}(for details, cf. \cite{CCFMnonsp}).

\begin{definition}\label{def:direct} Let $S$ be a scroll of degree $d$ and
genus $g$  corresponding to the  pair $(\Ff, C)$.
Let $B \subset F$ be a section and  $L $ as in \eqref{eq:Fund}.
Let $\Gamma := \Phi(B) \subset S$. If $\Phi|_B$ is birational to its image, then
$\Gamma$ is called  a {\em section} of the scroll $S$. If $\Phi|_B$ is finite of degree
$n$ to its image, then $\Gamma$ is called a $n$-{\em directrix} of $S$, and the general point
of $\Gamma$ has multiplicity at least $n$ for $S$ (cf.\ e.g.\ \cite[Def.\ 1.9]{GP2}). If
$\Phi|_B$ is not finite, then $S$ is a {\em cone}.

We will say that the pair
$(S,\Gamma)$ is {\em associated with} \eqref{eq:Fund} and that $\Gamma$
{\em corresponds to the line bundle} $L$ on $C$.
If $m = \deg(L)$, then $m = n h$ and $\Gamma$ has degree $h$; indeed, the map
$\Phi|_B$ is determined by the linear series $\Lambda \subseteq |L|$, which
is the image of the map$$H^0(\Ff) \to H^0(L).$$

Similar terminology can be introduced for reducible unisecant curves.

\end{definition}

When $g= 0$ we have rational scrolls and these are well-known (see
e.g.\ \cite{GH}). Thus, from now on, we shall  focus on the case $g
\geq 1$.

Since ruled surfaces and scrolls are the projective counterpart of
the theory of rank-two vector bundles, we finish this section by recalling
some basic terminology on vector bundles. For
details, we refer the reader to e.g.\ \cite{New} and
\cite{Ses}.

Let $\Ef$ be a vector bundle of rank $r \geq 1$ on $C$. The {\em slope} of $\Ef$,
denoted by $\mu(\Ef)$, is defined as
\begin{equation}\label{eq:slope}
\mu(\Ef) := \frac{\deg(\Ef)}{r}.
\end{equation}

A rank-two vector bundle $\Ff$ on $C$ is
said to be {\em indecomposable}, if it cannot be expressed as a
direct sum $L_1 \oplus L_2$, for some $L_i \in {\rm Pic}(C)$, $1 \leq i
\leq 2$, and {\em decomposable} otherwise. Furthermore, $\Ff$ is said to be:
\begin{itemize}
\item {\em semistable}, if for any sub-line bundle  $N \subset \Ff
$, $\deg(N) \leq \mu(\Ff)$;
\item {\em stable}, if for any sub-line bundle $N \subset \Ff
$, $\deg(N) < \mu(\Ff)$;
\item {\em strictly semistable}, if it is semistable and
there is a sub-line bundle $N \subset \Ff
$ such that $\deg(N) = \mu(\Ff)$;
\item {\em unstable}, if there is a sub-line bundle $N \subset \Ff
$ such that $\deg(N) > \mu(\Ff)$. In this case, $N$ is called a {\em destabilizing} sub-line bundle of $\Ff$.
\end{itemize}

\section{Preliminary results on scrolls}\label{S:PreRes}

In this section, we recall some preliminary results concerning scrolls of
degree $d$ and genus $g \geq 1$ (cf.\ \cite{Seg}, \cite{GP1}, \cite{GP2} and
\cite[\S\;3]{CCFMnonsp}). We will keep the notation introduced in \S\;\ref{S:NotPre}.

If $K_F$ denotes a canonical divisor of $F$, one has
$$K_F \sim - 2 H + (\omega_C \otimes {\rm det}(\Ff)) f,$$hence
$$K_F \equiv  -2 H + (d+2g-2)f.$$

From Serre duality and the Riemann-Roch theorem, we have:
$$R+1:= h^0(\Oc_F(1)) = d - 2 g + 2 + h^1(\Oc_F(1)).$$

\begin{definition}\label{def:spec}
The non-negative integer $h^1 (\Oc_F(1))$ is called the {\em speciality} of the scroll and will be denoted by
$h^1(F)$, or simply by $h^1$, if there is no danger of confusion.
Thus
\begin{equation}\label{eq:R}
R= d - 2 g + 1 + h^1,
\end{equation} and the pair $(\Ff, C)$ determines $S \subset \Pp^R$ as a
{\em linearly normal scroll of degree $d$, genus $g$ and
speciality $h^1$}. Such a scroll $S$ is said to be {\em special} if $h^1 > 0$,
{\em non-special} otherwise.
\end{definition}

This definition coincides with the classical one given by Segre in \cite[\S\;3, p.\ 128]{Seg}:
notice that Segre denotes by $n$ the degree
of the scroll, by $p$ the sectional genus and by $ i :=  g-h^1$. Obviously, a scroll $S$ determined
by a pair $(\Ff,C)$ is special if $\iota:= h^1(C, {\rm det}(\Ff)) >0$. We will call such scrolls
{\em strongly special scrolls}. In this paper, we will restrict our attention to special scrolls
of  degree strictly larger than $2g-2$, which are therefore not strongly special.

Since $R \geq 3$, then $d \geq 2g+2-h^1$. One has an upper-bound for the speciality (cf.\ \cite[\S\,14]{Seg},
\cite{Ghio}).

\begin{lemma}\label{lem:segre}
In the above setting, if $\det(\Ff)$ is non-special, then $h^1 \leq g$ and,
if $d \geq 2g +2$, the equality holds  if and only if $\Ff =
\Oc_C \oplus L$, in which case $\Phi = \Phi_{|\Oc_F(1)|} $ maps $F$ to a cone $S$
over a projectively normal curve of degree $d$ and genus $g$ in
$\Pp^{d-g}$.
\end{lemma}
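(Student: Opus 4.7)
My plan is to reduce the speciality of $F$ to curve cohomology, extract an $\Oc_C$ sub-bundle of $\Ff$ from a nowhere-vanishing section, and analyze the resulting extension. First I would apply the Leray spectral sequence for $\rho:F\to C$ (using $\rho_*\Oc_F(1)=\Ff$ and vanishing of higher direct images) to identify $h^1(F,\Oc_F(1))=h^1(C,\Ff)$. Since $|\Oc_F(1)|$ is base-point-free by Assumption~\ref{ass:1}(2), $\Ff$ is globally generated on $C$, and a standard dimension count (each kernel of $H^0(\Ff)\to\Ff_p$ has codimension $2$, so their union over $p\in C$ has codimension at least one in $H^0(\Ff)$) produces a nowhere-vanishing global section, hence a sub-bundle inclusion $\Oc_C\hookrightarrow\Ff$ with quotient $\det\Ff$. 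From the cohomology of
$$
0 \to \Oc_C \to \Ff \to \det\Ff \to 0
$$
together with the hypothesis $h^1(\det\Ff)=0$, I would read off $h^1(\Ff)=g-\rk(\partial)\leq g$, where $\partial:H^0(\det\Ff)\to H^1(\Oc_C)$ is the connecting homomorphism, establishing the inequality.

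For the equality case, I would assume $d\geq 2g+2$ and $h^1(\Ff)=g$, and take $N\subset\Ff$ a maximal-degree sub-line bundle, with quotient $L=\Ff/N$ and $\nu=\deg N\geq 0$; global generation of $\Ff$ passes to $L$, so $L$ is effective. A key observation is that, for any line bundle $L$ on $C$ with $\deg L\geq 0$ and $g\geq 1$, the equality $h^1(L)=g$ forces $L\cong\Oc_C$ (the identity $h^0(K_C-L)=h^0(L)+g-1-\deg L$ combined with Clifford's inequality forces $\deg L=0$ and $h^0(L)=1$). If $\nu>2g-2$ then $h^1(N)=0$, the long exact sequence yields $h^1(\Ff)\leq h^1(L)\leq g$, and equality forces $L\cong\Oc_C$, so $\nu=d$ and $N\cong\det\Ff$; the extension $0\to\det\Ff\to\Ff\to\Oc_C\to 0$ splits automatically because $\mathrm{Ext}^1(\Oc_C,\det\Ff)=H^1(\det\Ff)=0$, yielding $\Ff\cong\Oc_C\oplus\det\Ff$. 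The sub-case $\nu\leq 2g-2$ is ruled out by a Clifford-type analysis: combining Clifford bounds on $N$ and $L$ with the rank-two semistable Clifford bound $h^0(\Ff)\leq 2+d/2$ (when $\Ff$ is semistable of slope $\leq 2g-2$) and the vanishing $h^1(\Ff)=0$ (when $\Ff$ is semistable of slope $>2g-2$), one obtains $h^1(\Ff)\leq 2g-d/2\leq g-1$ for $d\geq 2g+2$, contradicting $h^1(\Ff)=g$.

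Finally, given the splitting $\Ff=\Oc_C\oplus L$ with $L=\det\Ff$, the section of $F$ corresponding to the $L$-subbundle (with quotient $\Oc_C$) is mapped by $\Phi$ via the one-dimensional image of the projection $H^0(\Ff)\twoheadrightarrow H^0(\Oc_C)\cong\C$, so it collapses to a single point $v\in\Pp^R$, exhibiting $S$ as a cone with vertex $v$. The base curve is the image of $C$ under the complete non-special linear system $|L|$ of degree $d\geq 2g+2$ (complete by surjectivity of $H^0(\Ff)\twoheadrightarrow H^0(L)$), which embeds $C$ projectively normally in $\Pp^{d-g}$ by the classical normal-generation theorem for line bundles of degree $\geq 2g+1$. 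The principal technical obstacle in the plan is the Clifford-type analysis ruling out $\nu\leq 2g-2$ in the equality case, where one must carefully navigate the semi-stability regimes and handle the possibility that the max-degree sub-line bundle $N$ fails to be effective (invoking Riemann-Roch in place of Clifford when $h^0(N)=0$).
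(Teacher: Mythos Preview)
The paper does not actually prove this lemma; after the statement it says only ``For a proof, the reader is referred to \cite[Lemma 3.5]{CCFMnonsp}.'' So there is no in-paper argument to compare against, and your proposal stands on its own.

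Your plan is correct. The identification $h^1(\Oc_F(1))=h^1(C,\Ff)$ via Leray, the extraction of a nowhere-vanishing section from global generation, and the bound $h^1(\Ff)=g-\rk(\partial)\leq g$ from the cohomology of $0\to\Oc_C\to\Ff\to\det\Ff\to 0$ are all standard and accurate. For the equality characterisation, your key observation that $h^1(L)=g$ with $\deg L\geq 0$ forces $L\cong\Oc_C$ (via Riemann--Roch plus Clifford) is exactly right, and the case $\nu>2g-2$ then goes through cleanly, including the splitting via $\mathrm{Ext}^1(\Oc_C,\det\Ff)=H^1(\det\Ff)=0$.

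The sub-case $\nu\leq 2g-2$ that you flag as the main obstacle is genuinely routine with the tools you name. If $\Ff$ is semistable, your two regimes (semistable Clifford when $d\leq 4g-4$, vanishing of $h^1$ when $d>4g-4$) both yield $h^1(\Ff)\leq g-1$. If $\Ff$ is unstable then $\nu>d/2\geq g+1$, so Riemann--Roch forces $h^0(N)\geq\nu+1-g\geq 3$; hence $N$ is effective, and Clifford on $N$ and on the globally generated $L$ gives $h^1(\Ff)\leq h^1(N)+h^1(L)\leq (g-\nu/2)+(g-(d-\nu)/2)=2g-d/2\leq g-1$. In particular your worry about $h^0(N)=0$ is moot in the unstable case, and in the semistable case you bypass $N$ entirely. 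The cone description and the projective normality of the base curve (degree $\geq 2g+1$) are correct. You should also record the trivial converse $h^1(\Oc_C\oplus\det\Ff)=g+0=g$, which completes the ``if and only if''.
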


\noindent
For a proof, the reader is referred to \cite[Lemma 3.5]{CCFMnonsp}.

\begin{remark}\label{rem:ss} \normalfont{It is not difficult to give similar bounds for the speciality
of strongly special scrolls. Since for such a scroll $3 \leq R = d-2g+1+h^1$ and $d \leq 2g-2$, one has
$h^1 \geq 4$.

If $\rho : H^0(C, \Ff) \to H^0(C, {\rm det}(\Ff))$ is the natural restriction map, then
$h^1 = \iota + g - {\rm dim}({\rm Coker} \,(\rho))$.

Using cones, one sees that every admissible value of $h^1$ can be obtained, in case
$\rho$ is surjective.
}
\end{remark}

From \eqref{eq:R} and from Lemma \ref{lem:segre}, we have
\begin{equation}\label{eq:boundh0}
d - 2g + 1 \leq  R \leq d -g + 1,
\end{equation}where the upper-bound is realized by cones if $d \geq 2g+2$,
whereas the lower-bound is attained by non-special scrolls (cf.\ \cite[Theorem 5.4]{CCFMLincei}).
Any intermediate value of $h^1$ can be realized, e.g.\ by
using decomposable vector bundles
(see, \cite[pp.\ 144-145]{Seg} and \cite[Example 3.7]{CCFMnonsp}).

\begin{definition}\label{def:lndirec} Let $\Gamma \subset S$ be a section
associated to \eqref{eq:Fund}. Then:
\begin{itemize}
\item[(i)] $\Gamma$ is said to be {\em special} if
$h^1(C, L)>0$;
\item[(ii)] $\Gamma$ is said to be {\em linearly normally embedded} if
$H^0(\Ff) \to\!\!\!\!\to H^0(L)$.
\end{itemize}
\end{definition}

\begin{proposition}\label{prop:oss3}(cf. \cite[Thm. 3.6 and Cor. 3.7]{GP1})
Let $S \subset \Pp^R $ be a linearly normal scroll of degree $d$,
genus $g \geq 1$ and speciality $h^1 >0$.
Then $S$ contains a special section of speciality $i \leq  h^1$.
\end{proposition}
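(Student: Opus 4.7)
The plan is to extract a line-bundle quotient of $\Ff$ from the assumed non-vanishing $h^1(C,\Ff)=h^1>0$. By Serre duality on $C$, $H^1(C,\Ff)\cong H^0(C,\Ff^\vee\otimes\omega_C)^\vee$, so the speciality of $S$ supplies a nonzero homomorphism $\phi:\Ff\to\omega_C$. The special section will be the one associated, via the correspondence recalled in \S\,\ref{S:NotPre}, to the line-bundle quotient of $\Ff$ produced by $\phi$.

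The next step is to identify the image of $\phi$. Since $C$ is smooth, any nonzero coherent subsheaf of the line bundle $\omega_C$ is torsion-free of rank one, hence locally free and necessarily of the form $L:=\omega_C(-D)$ for some effective divisor $D\geq 0$. Thus $\phi$ factors as a surjection $\Ff\to\!\!\!\to L$ followed by the inclusion $L\hookrightarrow\omega_C$. Because $L$ is torsion-free, the kernel of the surjection is automatically saturated in $\Ff$ and so locally free; calling it $N$, we obtain
\begin{equation*}
0 \to N \to \Ff \to L \to 0.
\end{equation*}
By the one-to-one correspondence in \S\,\ref{S:NotPre}, this sequence determines a section $B$ of $F$ and hence a section (or directrix) $\Gamma\subset S$ corresponding to the line bundle $L$.

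To conclude, I would verify the two assertions about $\Gamma$. Serre duality gives
\begin{equation*}
h^1(C,L) \;=\; h^0(C,\omega_C\otimes L^\vee) \;=\; h^0(C,\Oc_C(D)) \;\geq\; 1,
\end{equation*}
since $D$ is effective, so $\Gamma$ is special in the sense of Definition \ref{def:lndirec}. The long cohomology sequence of the displayed exact sequence gives a surjection $H^1(C,\Ff)\to H^1(C,L)$, yielding the required bound
\begin{equation*}
i \;:=\; h^1(C,L) \;\leq\; h^1(C,\Ff) \;=\; h^1.
\end{equation*}

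The only delicate points, namely that the image of $\phi$ is already locally free and that the kernel of the induced surjection is automatically saturated, are standard consequences of the smoothness of $C$ and the torsion-freeness of line bundles, so no genuine obstacle arises. The whole argument is essentially a one-line application of Serre duality, packaged into the language of sections and directrices.
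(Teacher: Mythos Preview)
The paper does not give its own proof here; the proposition is stated with a citation to \cite[Thm.\ 3.6 and Cor.\ 3.7]{GP1}. Your argument is correct and is the standard one underlying that reference: Serre duality converts $h^1(C,\Ff)>0$ into a nonzero morphism $\Ff\to\omega_C$, whose image $L=\omega_C(-D)$ is the required special quotient, and the surjection $H^1(\Ff)\to\!\!\!\to H^1(L)$ from the long exact sequence yields the bound $i\le h^1$. The only caveat is terminological: Definition~\ref{def:direct} reserves the word \emph{section} of $S$ for the case where $\Phi|_B$ is birational onto its image, which you do not verify; your parenthetical ``(or directrix)'' already acknowledges this, and the substance of the proposition is unaffected.
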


We will need a more precise result, mainly contained in \cite{GP4}.
First we prove a lemma.

\begin{lemma}\label{lem:maxdeg} Let $C$ be a smooth, projective curve of genus $g \geq 2$,
with general moduli. Let $0 < h^1 < g$ be an integer.

If $|M|$ is a complete linear system on $C$ having
maximal degree and maximal dimension with respect to the condition $h^1(C,M) = h^1$,
then $$\dim |M| = \overline{h}\qquad \text{and}
\qquad \deg (M) = \overline{m}$$where
\begin{equation}\label{eq:maxdeg}
\overline{h}:= \lfloor \frac{g}{h^1} -1 \rfloor \;\,\; \text{and} \;\;\;
\overline{m}:= \lfloor \frac{g}{h^1} -1 \rfloor + g - h^1.
\end{equation}
\end{lemma}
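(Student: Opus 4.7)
The plan is to translate the problem into a Brill-Noether existence question on a general curve. By Riemann-Roch, for any line bundle $M$ on $C$ with $h^1(C,M)=h^1$ fixed,
\[
\dim|M| = h^0(C,M) - 1 = \deg(M) - g + h^1,
\]
so among such line bundles $\dim|M|$ and $\deg(M)$ differ by the constant $g-h^1$ and are maximized together. Writing $r = \dim|M|$ and $d=\deg(M)=g+r-h^1$, a line bundle realizing the prescribed invariants corresponds to a point of $W^r_d(C)\setminus W^{r+1}_d(C)$, and the question reduces to finding the largest integer $r$ for which such a point exists on a general $C$.

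Next I would invoke the classical Brill-Noether theorem for curves with general moduli. A direct computation gives
\[
\rho(g,r,d) = g - (r+1)(g-d+r) = g - (r+1)h^1,
\]
and on a general curve one has $W^r_d(C)\neq\emptyset$ if and only if $\rho(g,r,d)\geq 0$ (existence by the theorems of Kempf and Kleiman-Laksov; emptiness for $\rho<0$ by Griffiths-Harris). Hence an admissible $M$ forces $(r+1)h^1\leq g$, and since $r$ is an integer this gives $r \leq \lfloor g/h^1 \rfloor - 1 = \overline{h}$ after the elementary identity $\lfloor g/h^1 - 1\rfloor = \lfloor g/h^1\rfloor - 1$.

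To see the bound is sharp, take $r=\overline{h}$. Then $\rho(g,r,d)\geq 0$, so $W^r_d(C)\neq\emptyset$, whereas $\rho(g,r+1,d)=g-(r+2)h^1<0$ by maximality of $r$, so $W^{r+1}_d(C)=\emptyset$ on $C$. Every $M\in W^r_d(C)$ therefore satisfies $h^0(M)=r+1$ and $h^1(M)=h^1$ exactly, with $\deg M = g+\overline{h}-h^1 = \overline{m}$, achieving both bounds simultaneously. The only non-routine step throughout is invoking both directions of Brill-Noether for general curves; beyond citing the correct references I do not foresee any genuine obstacle.
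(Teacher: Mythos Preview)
Your proposal is correct and follows essentially the same route as the paper: both reduce the question to Brill--Noether existence on a general curve, compute $\rho(g,r,r+g-h^1)=g-(r+1)h^1$, and read off the maximal admissible $r$. Your version is in fact slightly more explicit than the paper's in verifying sharpness, since you check that $W^{r+1}_d(C)=\emptyset$ at $r=\overline{h}$, ensuring the speciality is exactly $h^1$ rather than at least $h^1$.
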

\begin{proof} If we fix the index of speciality $i$ and the genus $g$,
the Brill-Noether number $\rho(g,r,d) = \rho(g, r, r+g-i) := g - (r+1) i$
is a decreasing function of $r$. Since $C$ has general moduli, there exists a complete, b.p.f. and
special $\g^h_m$, of given speciality $h^1 = i$, if and only if
\begin{equation}\label{eq:BN2}
\rho(g,h,m) = g - (h+1)h^1 \geq 0.
\end{equation}Hence,
the maximal dimension for such a linear series is given by  the integer $\overline{h}$
as in \eqref{eq:maxdeg}. Accordingly, we get the expression for the maximal degree.
\end{proof}

\begin{theorem}\label{thm:seg1e2} Let $C$ be a smooth, projective curve of genus $g \geq 3$.
Let $0 < h^1 < g$ and $d \geq 4g - 2h^1 - {\rm Cliff}(C) + 1$ be integers.
Let $S \subset \Pp^R$ be a smooth, linearly normal, special scroll of degree $d$ and
speciality $h^1$ on $C$. Then:

\noindent (i) $S$ contains a unique, special section $\Gamma$. Furthermore, $\Gamma$ is linearly normally
embedded, its speciality equals the speciality of $S$, i.e.
$h^1(\Gamma, \Oc_{\Gamma}(1)) = h^1$ and, if $deg( \Gamma) : = m$, then
\begin{equation}\label{eq:bound}
\Gamma \subset \Pp^h, \; {\rm with} \; 2h  \leq m = h + g - h^1.
\end{equation}Moreover $\Gamma$ is the curve on $S$, different from a ruling, with minimal degree.

If in addition $d \geq 4g-3$, then $\Gamma$ is the unique section
with non-positive self-intersection.

\noindent (ii) If $C$ has general moduli and $d \geq \frac{7g-\epsilon}{2} - 2h^1 + 2$, where
$ 0 \leq \epsilon \leq 1$ and $\epsilon \equiv g \mod{2}$, then
\begin{equation}\label{eq:BN1}
either \; \; g \geq 4  h^1 , \; h\geq 3, \; \; or \; \; g=3, \; h^1=1, \; h=2.
\end{equation}Moreover
\begin{equation}\label{eq:bound2}
2h  \leq (h+g-h^1) \frac{h}{h+1} \leq m = h + g - h^1.
\end{equation}Furthermore $\Gamma$ is the unique section
with non-positive self-intersection.

\end{theorem}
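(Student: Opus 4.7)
The plan is to combine Proposition \ref{prop:oss3} with the linear normality and uniqueness results of \cite{GP3,GP4}, then extract the numerical constraints via Riemann-Roch and Clifford's theorem; for (ii), Brill-Noether theory on the general moduli curve $C$ (via Lemma \ref{lem:maxdeg}) will do the remaining work.

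For part (i), Proposition \ref{prop:oss3} furnishes a special section, and the hypothesis $d \geq 4g - 2h^1 - \mathrm{Cliff}(C) + 1$ is exactly the range in which the main theorems of \cite{GP3,GP4} apply, yielding at once the uniqueness of the special section $\Gamma$, its linear normality, and the equality $h^1(\Gamma, \OO_\Gamma(1)) = h^1$. Riemann-Roch for $L = \OO_\Gamma(1)$ on $C \cong \Gamma$ then gives $m = h + g - h^1$, and Clifford's theorem for the special bundle $L$ yields $2h \leq m$. For the minimality of $\deg \Gamma$, any other irreducible unisecant $B$ corresponds, by uniqueness of $\Gamma$, to a non-special line bundle, and combining linear normality with the intersection inequality $B\cdot\Gamma = \deg(B) + m - d \geq 0$ forces $\deg(B) \geq m$; reducible unisecants and directrices are handled similarly via \eqref{eq:Fund2}. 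For the uniqueness of sections with $B^2 \leq 0$ when $d \geq 4g-3$, such a $B$ satisfies $\deg(B) \leq d/2$ and $\deg(B) \geq d - m$, but $m \leq 2(g-h^1) \leq 2g-2$ combined with $d \geq 4g-3$ gives $d - m > d/2$, a contradiction.

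For part (ii), part (i) produces the unique special section $\Gamma$ with $m = h + g - h^1$, and $L = \OO_\Gamma(1)$ is a complete, base-point-free, special $\mathfrak{g}^h_m$ on the general moduli curve $C$; Lemma \ref{lem:maxdeg} then gives the Brill-Noether bound $(h+1)h^1 \leq g$. Since $g \geq 3$ rules out $h \leq 1$, either $h \geq 3$ (whence $g \geq 4h^1$) or $h = 2$; in the second case one has $g \geq 3h^1$, and a further argument based on the plane model $C \to \Pp^2$ defined by $|L|$, together with the observation that a general moduli curve admits such a model only in the plane quartic case, singles out $(g, h^1, h) = (3, 1, 2)$ as the sole exception. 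The middle inequality of \eqref{eq:bound2}, equivalent to $m \geq 2h + 2$, then follows in the non-exceptional range by combining the Brill-Noether bound $h \leq \lfloor g/h^1 - 1 \rfloor$ from Lemma \ref{lem:maxdeg} with $g \geq 4h^1$. Finally, the uniqueness of $\Gamma$ as the section with $\Gamma^2 \leq 0$ is inherited from (i), since for $C$ of general moduli the bound $d \geq \frac{7g - \epsilon}{2} - 2h^1 + 2$ coincides with $d \geq 4g - 2h^1 - \mathrm{Cliff}(C) + 1$ and in particular implies $d \geq 4g-3$.

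The main obstacle will be the part (ii) analysis that rules out $h = 2$ outside the plane quartic case and produces the middle inequality in \eqref{eq:bound2}; both points require intertwining Brill-Noether theory on general moduli curves with the specific geometry of sections of a special scroll.
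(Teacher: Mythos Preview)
Your overall strategy matches the paper's: cite \cite{GP3,GP4} for existence/uniqueness/linear normality of the special section, get \eqref{eq:bound} from Riemann--Roch and Clifford, and in (ii) use Brill--Noether on a general curve via Lemma~\ref{lem:maxdeg}. Your derivation of \eqref{eq:BN1} through the plane-curve argument for $h=2$ is essentially what the paper has in mind when it writes ``Since $S$ is smooth, then either $h\geq 3$ or $h=2$, $g=3$ and $h^1=1$'': very ampleness forces $\Gamma$ to be a smooth plane curve, and general moduli then pins down the plane quartic.

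There is, however, a genuine gap in your last sentence. You assert that the bound $d \geq \frac{7g-\epsilon}{2} - 2h^1 + 2$ ``in particular implies $d \geq 4g-3$'', and use this to import the self-intersection statement from part~(i). This implication is false: the inequality $\frac{7g-\epsilon}{2} - 2h^1 + 2 \geq 4g-3$ rearranges to $g \leq 10 - \epsilon - 4h^1$, which fails already for $h^1=1$, $g\geq 7$ (e.g.\ $g=20$, $h^1=1$ gives the (ii)-bound $d\geq 70$ but $4g-3=77$). So you cannot inherit the conclusion from (i); you must show $\Gamma^2 = 2m-d < 0$ directly under the hypotheses of (ii). The paper does this by combining the assumption on $d$ with the upper bound $m\leq\overline m$ from Lemma~\ref{lem:maxdeg}, and then runs the same short intersection argument: if $\Gamma'\equiv\Gamma+\alpha l$ is a different section with $(\Gamma')^2\leq 0$, then $0\leq\Gamma'\cdot\Gamma=\Gamma^2+\alpha$ forces $\alpha\geq -\Gamma^2>0$, whence $(\Gamma')^2=\Gamma'\cdot\Gamma+\alpha>0$, a contradiction. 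Your own argument in (i) (via $\deg(B)\leq d/2$ and $\deg(B)\geq d-m$) is equivalent, but it too needs $d>2m$ as input, not $d\geq 4g-3$.

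A minor additional point: in (i) you sketch the minimality of $\deg\Gamma$ via $B\cdot\Gamma\geq 0$, which only yields $\deg(B)\geq d-m$; this gives $\deg(B)\geq m$ only once $\Gamma^2\leq 0$ is known, so it does not stand on its own under the bare hypothesis $d\geq 4g-2h^1-\mathrm{Cliff}(C)+1$. The paper simply defers minimality to \cite{GP3,GP4} along with the other properties of $\Gamma$.
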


\begin{corollary}\label{cor:seg1e2} Let
$g \geq 4$, $0 < h^1 < g$ and $d \geq \frac{7g-\epsilon}{2} - 2h^1 + 2$ be integers, with $\epsilon$ as above.
Assume  that $g < 4 h^1$.
Then, if $\HH$ denotes any irreducible component of the Hilbert scheme of linearly normal,
smooth, special scrolls in $\Pp^{R}$ of degree $d$, genus
$g$ and  speciality $h^1$, the general point of $\HH$ parametrizes
a scroll with special moduli.
\end{corollary}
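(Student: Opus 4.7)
The plan is to argue by contradiction, leveraging Theorem \ref{thm:seg1e2}(ii) essentially as a black box. I would begin by supposing that $\HH$ is a component of the Hilbert scheme whose general point $[S]$ parametrizes a scroll $S$ whose base curve $C$ has general moduli, and then derive an incompatibility with the numerical hypotheses $g\geq 4$ and $g<4h^1$.

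First I would observe that all the hypotheses of Theorem \ref{thm:seg1e2}(ii) are in force for such a general $[S]$: $S$ is smooth, linearly normal, special of the prescribed degree, genus, and speciality, the base curve $C$ has general moduli by assumption, and the degree bound $d\geq \frac{7g-\epsilon}{2}-2h^1+2$ is precisely the corollary's hypothesis. Consequently Theorem \ref{thm:seg1e2}(ii) applies and forces the dichotomy \eqref{eq:BN1}: either $g\geq 4h^1$ with $h\geq 3$, or else $(g,h^1,h)=(3,1,2)$.

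The heart of the argument is then a direct check that neither alternative of \eqref{eq:BN1} is compatible with the standing assumptions. The hypothesis $g<4h^1$ immediately rules out the first alternative, and the hypothesis $g\geq 4$ rules out the second. This contradiction shows that no general point of $\HH$ can correspond to a scroll with general moduli, so the general point of $\HH$ must parametrize a scroll with special moduli, as asserted.

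I do not foresee a genuine obstacle here: the corollary is essentially a contrapositive reformulation of the existence result in Theorem \ref{thm:seg1e2}(ii), packaging together the numerical constraint coming from Brill--Noether theory (which produced \eqref{eq:BN1} in the first place, via Lemma \ref{lem:maxdeg}) with the uniqueness and speciality properties of the minimal section $\Gamma$. The only point deserving a short comment in the write-up is why the hypotheses of Theorem \ref{thm:seg1e2}(ii) transfer from the smooth, linearly normal, special scroll $S$ itself to a well-defined pair $(\Ff,C)$ with $C$ of general moduli, so that the value of $h$ appearing in \eqref{eq:BN1} is genuinely the projective dimension of the unique minimal section $\Gamma\subset S$ given by Theorem \ref{thm:seg1e2}(i); this follows from Definition \ref{def:scroll} and the upper semicontinuity of the moduli map on $\HH$.
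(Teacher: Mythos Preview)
Your proposal is correct and matches the paper's approach: the corollary is stated in the paper without a separate proof, being an immediate consequence of the dichotomy \eqref{eq:BN1} in Theorem \ref{thm:seg1e2}(ii), exactly as you argue. Your final remark about semicontinuity is a bit more than is needed---saying that the general point of $\HH$ has general moduli simply means the moduli map $\HH \dasharrow \M_g$ is dominant, so one may choose a point $[S]$ with base curve $C$ general and apply the theorem directly---but this does no harm.
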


There are examples of such components of the Hilbert scheme (cf. Remark \ref{rem:19608}).

\begin{remark}\label{rem:seg1e2} If $m= 2h$ then, by Clifford's theorem,  either
$\Gamma$ is a canonical curve or it is hyperelliptic. However, the second possibility is not
compatible with smoothness. Therefore, only the first alternative holds if $S$ is smooth
(cf. \cite[p. 144]{Seg} and \cite[Example 3.7]{CCFMnonsp}).
\end{remark}

\begin{proof}[Proof of Theorem \ref{thm:seg1e2}] Part (i), except the assertion
about $\Gamma^2$, follows from \cite{GP3} and \cite{GP4}.
Note that \eqref{eq:bound} follows by Clifford's theorem.

The numerical hypotheses on $d$ in Part (ii) come from the expression of the
Clifford index for a curve with general moduli.
Since $S$ is smooth, then either $h \geq 3$ or $h=2$, $g =3$ and $h^1=1$,
which gives \eqref{eq:BN1}. Furthermore, \eqref{eq:BN2} reads
$m (h+1) \geq h (h+g+1)$. Then, \eqref{eq:bound2} holds since
$\frac{h}{h+1} (h+g+1) = h(1 + \frac{g}{h+1})$ and  $\frac{g}{h+1}\geq 1$.

From \eqref{eq:Ciro410b}, one has
$$\Gamma^2 =  2m-d$$and this is negative if either
$d \geq 4g-3$, for any curve $C$, or if $C$ has general moduli as in part (ii),
because of the assumptions on $d$ and
the bounds on $m$ in Lemma \ref{lem:maxdeg}.

Finally, let $\Gamma' \equiv \Gamma + \alpha l$ be a section different from $\Gamma$. Then
$0 \leq \Gamma'\cdot \Gamma = \Gamma^2 + \alpha$,
hence $\alpha \geq - \Gamma^2 > 0$. If
$0 \geq (\Gamma')^2 = \Gamma^2 + 2 \alpha = \Gamma'\cdot \Gamma + \alpha > \Gamma'\cdot\Gamma $
we get a contradiction. \end{proof}

\begin{remark}\label{rem:pedreira} It is not true that the minimal section $\Gamma$ in Theorem
\ref{thm:seg1e2} is the only special curve on $S$ (cf. e.g. \cite[p. 129]{Seg}). In \cite[Proposition 2.2]{GP1} the authors prove that all the irreducible bisecant sections are special with speciality $h^1$. It would be interesting to make an analysis of the speciality of multi-secant curves.
\end{remark}

Under the assumption of Theorem \ref{thm:seg1e2}, the existence of a
special section  provides
information on the rank-two vector bundle on $C$ determining the scroll $S$.

\begin{proposition}\label{prop:cases} Same hypotheses as in Theorem \ref{thm:seg1e2}, Part (i), with
$d \geq 4g-3$, or as in Part (ii), if $C$ has general moduli. Suppose
$S$ determined by a pair $(\Ff,C)$. Then $\Ff $ is unstable.
If, moreover, $d \geq 6g -5$ then $\Ff= L \oplus N$, where $\Ff \to\!\!\!\!\to L$ corresponds to the special
section as in Theorem \ref{thm:seg1e2}.
\end{proposition}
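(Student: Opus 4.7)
The plan is to exploit the unique special section $\Gamma$ provided by Theorem \ref{thm:seg1e2}, which is the key input common to both alternatives of the hypothesis. By the discussion around \eqref{eq:Fund}, $\Gamma$ corresponds to an exact sequence
\begin{equation*}
0 \to N \to \Ff \to L \to 0,
\end{equation*}
with $L$ a line bundle of degree $m = \deg(\Gamma)$ and $N$ a line bundle of degree $d - m$. Under either set of hypotheses Theorem \ref{thm:seg1e2} gives $\Gamma^2 = 2m - d < 0$, and therefore
\begin{equation*}
\deg(N) = d - m > \frac{d}{2} = \mu(\Ff).
\end{equation*}
Hence $N$ is a destabilizing sub-line bundle of $\Ff$, which yields the first assertion.

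For the second assertion, I would observe that the splitting of the above extension is controlled by its class in
\begin{equation*}
\operatorname{Ext}^1(L, N) \;\cong\; H^1(C, N \otimes L^{\vee}),
\end{equation*}
so the goal becomes to show that this group vanishes when $d \geq 6g - 5$. This would force the sequence to split, giving $\Ff \cong N \oplus L$ with the quotient onto $L$ being precisely the one determined by $\Gamma$, as required.

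The vanishing would follow from the elementary fact that $H^1(C, M) = 0$ for any line bundle $M$ on $C$ of degree at least $2g - 1$. In our situation this amounts to the inequality $\deg(N \otimes L^{\vee}) = d - 2m \geq 2g - 1$, and the only thing to check is the corresponding upper bound on $m$. Under case (i) of Theorem \ref{thm:seg1e2}, Clifford's theorem combined with \eqref{eq:bound} gives $2h \leq m = h + g - h^1$, so $h \leq g - h^1$ and therefore $m \leq 2(g - h^1)$; under case (ii) the same estimate follows a fortiori from Lemma \ref{lem:maxdeg}. Substituting into the hypothesis $d \geq 6g - 5$ yields
\begin{equation*}
d - 2m \;\geq\; 6g - 5 - 4(g - h^1) \;=\; 2g + 4h^1 - 5 \;\geq\; 2g - 1,
\end{equation*}
since $h^1 \geq 1$, as required.

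The only delicate point is the numerical bookkeeping: the threshold $6g - 5$ in the statement is sharp for the extremal case $h^1 = 1$, where $m$ can be as large as $2g - 2$; for larger values of $h^1$ the inequality becomes comfortable, which is a consistency check rather than a new difficulty. I do not anticipate any further subtlety beyond verifying this bound.
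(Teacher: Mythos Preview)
Your proof is correct and follows essentially the same route as the paper: instability from $\deg(N)=d-m>d/2$ via $\Gamma^2<0$, and splitting from the vanishing of $H^1(C,N\otimes L^{\vee})$ once $d-2m\geq 2g-1$. The only difference is that the paper reaches the bound on $m$ more directly: since $L$ is special one has $m\leq 2g-2$, whence $d-2m\geq 6g-5-(4g-4)=2g-1$; your Clifford estimate $m\leq 2(g-h^1)$ is sharper but not needed here.
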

\begin{proof} Let $\Gamma$ be the section as in Theorem \ref{thm:seg1e2}, corresponding to
the exact sequence \eqref{eq:Fund}. One has $\mu(\Ff) = \frac{d}{2}$ and $\deg(N) = d-m$.
From $\Gamma^2 = 2m-d <0$, one has
${\rm deg}(N)  > \mu(\Ff)$, hence $\Ff$ is unstable.

If $d \geq 6g-5$, from \eqref{eq:Fund}, we have
$[\Ff] \in {\rm Ext}^1(L,N) \cong H^1(C, N \otimes L^{\vee})$. Since $L$ is special,
${\rm deg}(N \otimes L^{\vee}) = d - 2m \geq 2g-1$, thus
$N \otimes L^{\vee}$ is non-special, so \eqref{eq:Fund} splits.

\end{proof}

%
\section{Projections and degenerations}\label{ProjDege}
%

Let $S$ be a smooth, special scroll of genus $g$ and degree $d$ as in  Theorem \ref{thm:seg1e2},
determined by a pair $(\Ff,C)$. Let $\Gamma$ be the special section of $S$  as in
Theorem \ref{thm:seg1e2} and let \eqref{eq:Fund}
be the associated exact sequence.

Let $l = l_q$ be the ruling of $S$ corresponding to the point $q \in C$ and let $p$ be a point on $l$. We want to consider
the ruled surface $S' \subset \Pp^{R-1}$ which is the projection of $S$ from $p$. We will assume that
$S'$ is smooth.

\begin{proposition}\label{prop:ciro53a} In the above setting, one has:
\begin{itemize}
\item[(i)] if $p \; \;  |\!\!\!\!\! \in \Gamma$, then $S'$ corresponds to a pair
$(\Ff',C)$, where $\Ff'$ fits into an exact sequence
\begin{equation}\label{eq:Fund3}
0 \to N (-q) \to \Ff' \to L \to 0;
\end{equation}
\item[(ii)] when $\Ff$ varies in the set of all extensions as in \eqref{eq:Fund} and $p$ varies on $l$,
then $\Ff'$ varies in the set of all extensions as in \eqref{eq:Fund3};
\item[(iii)] if $p \in \Gamma$, then $S'$ corresponds to a pair
$(\Ff',C)$, where $\Ff'$ fits into an exact sequence
\begin{equation}\label{eq:Fund3b}
0 \to N \to \Ff' \to L (-q) \to 0;
\end{equation}
\item[(iv)] when $\Ff$ varies in the set of all extensions as in \eqref{eq:Fund}
then $\Ff'$ varies in the set of all extensions as in \eqref{eq:Fund3b}.

\end{itemize}

\noindent If $d \geq 4g - 2h^1 - {\rm Cliff}(C) + 2$, then the
exact sequence \eqref{eq:Fund3} (resp. \eqref{eq:Fund3b})
corresponds to the unique special section of $S'$.
\end{proposition}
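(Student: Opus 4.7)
The plan is to interpret the projection $S \dashrightarrow S'$ from $p \in l_q$ as the elementary transformation $\mathrm{elm}_p$ of geometrically ruled surfaces, i.e.\ blow up $p$ in $F$ and contract the proper transform of $f_q$. At the sheaf level this reads $\Ff' = \ker(\Ff \twoheadrightarrow k(p))$, where $k(p)$ denotes the skyscraper at $q$ endowed with the surjection determined by the choice of $p \in \Pp(\Ff \otimes k(q))$; in particular $\deg(\Ff')=d-1$, matching the projective geometry. Identifying this elementary transformation at the bundle level and pinning down the correct surjection to $k(p)$ is the only step that requires some care; everything afterwards is an application of homological algebra together with Theorem \ref{thm:seg1e2}.

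For (i) and (iii), I would run the snake lemma on the composite $N \hookrightarrow \Ff \twoheadrightarrow k(p)$. Since $\Gamma \cap f_q$ is the point of $\Pp(\Ff\otimes k(q))$ cut out by the line $N\otimes k(q)$, this composite vanishes exactly when $p \in \Gamma$. If $p \notin \Gamma$ it is surjective with kernel $N(-q)$, and the snake lemma gives $\Ff'/N(-q) \cong \Ff/N = L$, yielding \eqref{eq:Fund3}. If $p \in \Gamma$ the composite vanishes, so $N \hookrightarrow \Ff'$, and one computes $\Ff'/N = \ker(L \twoheadrightarrow k(p)) = L(-q)$, yielding \eqref{eq:Fund3b}. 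For the surjectivity statements (ii) and (iv), I would invert the construction by a pushout. In (ii), given any $\Ff' \in \mathrm{Ext}^1(L, N(-q))$, the pushout of $\Ff' \leftarrow N(-q) \hookrightarrow N$ produces a locally free $\Ff$ extending $L$ by $N$ with $\Ff/\Ff' = k(q)$, hence a point $p$ of $f_q$ which by construction lies off $\Gamma$. In (iv), given $\Ff' \in \mathrm{Ext}^1(L(-q), N)$, the pushout of $L \hookleftarrow L(-q) \to \Ff'$ produces the required $\Ff$; alternatively, surjectivity of the pullback $\mathrm{Ext}^1(L,N) \to \mathrm{Ext}^1(L(-q),N)$ is immediate from the vanishing $\mathrm{Ext}^2(k(q),N)=0$ on the curve $C$.

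For the final assertion, note that $S' \subset \Pp^{R-1}$ is smooth, linearly normal, of degree $d-1$, genus $g$, and has the same speciality $h^1$ as $S$ (from the cohomology of $0 \to \II_p \otimes \Oc_S(1) \to \Oc_S(1) \to k(p) \to 0$, using that $p$ is a smooth point of $S$ and that $\Oc_S(1)$ is very ample). The hypothesis $d \geq 4g - 2h^1 - \mathrm{Cliff}(C) + 2$ translates exactly into the bound needed to invoke Theorem \ref{thm:seg1e2}(i) for $S'$, producing a unique special section of $S'$ of speciality $h^1$. In case (i) the quotient $L$ in \eqref{eq:Fund3} is still special with $h^1(L)=h^1$, so the associated section of $S'$ is this unique special section. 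In case (iii) the cohomology of $0 \to L(-q) \to L \to k(q) \to 0$ forces $h^1(L(-q))\geq h^1>0$, so the section of $S'$ associated to \eqref{eq:Fund3b} is again special; by uniqueness in Theorem \ref{thm:seg1e2} it must be the special section of $S'$, and as a by-product one recovers $h^1(L(-q))=h^1$, i.e.\ $q$ is not a base point of $|L|$.
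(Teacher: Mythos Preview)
Your argument is correct and follows essentially the same route as the paper: identify the projection with an elementary transformation, read off the resulting extension, and handle the surjectivity statements (ii) and (iv) via the natural maps between ${\rm Ext}^1$ groups. The paper is terser---it simply observes that $S'$ carries a section with the obvious hyperplane bundle, then writes down the surjective maps ${\rm Ext}^1(L,N(-q))\to{\rm Ext}^1(L,N)$ and ${\rm Ext}^1(L,N)\to{\rm Ext}^1(L(-q),N)$ and interprets the first as the inverse elementary transformation (blow up $\Gamma'\cap l'$ on $S'$ and contract). Your snake-lemma computation for (i)/(iii) and your pushout for (ii) are just a more explicit algebraic rendering of the same picture.

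One small correction: in (iv) the phrase ``the pushout of $L\hookleftarrow L(-q)\to\Ff'$'' is not well-formed, since there is no natural map $L(-q)\to\Ff'$ (you have $\Ff'\to L(-q)$). Your alternative argument via ${\rm Ext}^2(k(q),N)=0$ is the right one, and it is precisely the surjectivity of $H^1(N\otimes L^\vee)\to H^1(N(q)\otimes L^\vee)$ that the paper invokes. Your treatment of the final assertion---checking that $S'$ has degree $d-1$ and the same speciality, then applying Theorem~\ref{thm:seg1e2}(i)---is also what the paper has in mind when it calls that step ``clear''.
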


\begin{proof} Assertions (i) and (iii) are clear since  $S'$ contains a section
$\Gamma' $ such that, in the former case $\Oc_{\Gamma'} (1) \cong L$, in the latter
$\Oc_{\Gamma'} (1) \cong L \otimes \Oc_C(-q)$. More specifically,
there are surjective maps
$${\rm Ext}^1(L, N(-q)) \cong H^1 (N (-q) \otimes L^{\vee} ) \longrightarrow
H^1 (N  \otimes L^{\vee} ) \cong {\rm Ext}^1(L, N)$$
$${\rm Ext}^1(L, N) \cong H^1 (N \otimes L^{\vee} ) \longrightarrow H^1 (N  (q) \otimes L^{\vee} ) \cong {\rm Ext}^1(L (-q), N).$$
The former map has the following geometric interpretation: any surface $S'$ corresponding to a pair
$(\Ff',C)$ as in \eqref{eq:Fund3} comes as a projection of a surface $S$
corresponding to a pair $(\Ff,C)$ as in \eqref{eq:Fund}. The surface $S$ is obtained by blowing-up
the intersection point of the ruling $l'$ corresponding to $q$ with the section $\Gamma'$ and then
contracting the proper transform of $l'$.
The latter map corresponds to the projection of the surface $S$ to $S'$ from the intersection point of $l$ with
$\Gamma$.

The surjectivity of the above maps imply assertions (ii) and (iv). The final assertion is clear.
\end{proof}

The following corollary will be useful later.

\begin{corollary}\label{cor:ciro53b} In the above setting, let $q$ be a point in $C$ such that
$L(-q)$ is very ample on $C$. Then there is a flat degeneration of $S$ to a smooth scroll $S'$ corresponding to
a pair $(\mathcal G, C)$ with $\mathcal G$ fitting in an exact sequence
$$0 \to N(q) \to \mathcal G \to L(-q) \to 0.$$
\end{corollary}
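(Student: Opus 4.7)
The plan is to realize both $S$ and the target scroll $S'$ as projections of a single smooth auxiliary scroll $\tilde S \subset \Pp^{R+1}$ of degree $d+1$, from two distinct points of one of its rulings, and then let the projection center vary along that ruling.

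To build $\tilde S$, choose a generic extension
$$0 \to N(q) \to \tilde\Ff \to L \to 0.$$
In the degree range considered, this can be arranged so that $\tilde\Ff$ is very ample with $h^0(\tilde\Ff) = R+2$, whence $\tilde S := \Phi_{|\Oc(1)|}(\Pp(\tilde\Ff))$ is a smooth, linearly normal scroll in $\Pp^{R+1}$ of degree $d+1$, with minimal section $\tilde \Gamma$ corresponding to $L$. Now apply Proposition \ref{prop:ciro53a} to $\tilde S$ with $N(q)$ in place of $N$: by Part (iii), the projection of $\tilde S$ from $\tilde p := \tilde \Gamma \cap \tilde l_q$ is a scroll $S' \subset \Pp^R$ corresponding to an extension $0 \to N(q) \to \mathcal{G} \to L(-q) \to 0$, as desired; by Parts (i)--(ii), the projection of $\tilde S$ from a point $\tilde p' \in \tilde l_q \setminus \tilde \Gamma$ is a scroll corresponding to an extension $0 \to N \to \Ff' \to L \to 0$, and every such extension class in $\mathrm{Ext}^1(L,N)$ can be realized this way, including the one giving back our $S$.

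Letting the projection center move along $\tilde l_q \cong \Pp^1$ from $\tilde p'$ (giving $S$) to $\tilde p$ (giving $S'$) produces a one-parameter family of surfaces in $\Pp^R$ of constant Hilbert polynomial, hence a flat family in the Hilbert scheme with general fiber $S$ and special fiber $S'$. The main subtle point is the smoothness of $S'$, i.e.\ the very ampleness of $\mathcal{G}$: since $L(-q)$ is very ample by hypothesis and $\deg N(q)$ is large enough in the range under consideration, a standard very-ampleness argument for extensions of line bundles on curves shows that a generic class in $\mathrm{Ext}^1(L(-q), N(q))$ yields a very ample $\mathcal{G}$, and via the surjection $\mathrm{Ext}^1(L, N(q)) \twoheadrightarrow \mathrm{Ext}^1(L(-q), N(q))$ exhibited in the proof of Proposition \ref{prop:ciro53a}, this is arranged by the initial generic choice of $\tilde\Ff$.
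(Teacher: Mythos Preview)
Your overall strategy coincides with the paper's: realize both $S$ and $S'$ as projections of a single auxiliary scroll $T\subset\Pp^{R+1}$ corresponding to an extension $0\to N(q)\to\tilde\Ff\to L\to 0$, from two points of the ruling $\tilde l_q$, and then slide the center of projection. The gap lies in how you select $\tilde\Ff$. You fix $\tilde\Ff$ to be \emph{generic} and then invoke Parts (i)--(ii) of Proposition~\ref{prop:ciro53a} to conclude that, for this fixed $\tilde S$, some point $\tilde p'\in\tilde l_q$ projects onto our given $S$. But Part~(ii) only says that every class in $\mathrm{Ext}^1(L,N)$ is reached when \emph{both} $\tilde\Ff$ and $\tilde p'$ are allowed to vary. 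For a fixed $\tilde\Ff$, the projections from points of $\tilde l_q$ form a one-parameter family and cannot cover $\mathrm{Ext}^1(L,N)$ once the latter has dimension $\geq 2$. More precisely, by the description in the proof of Proposition~\ref{prop:ciro53a}, all such projections lie in a single fibre of the surjection $\mathrm{Ext}^1(L,N)\twoheadrightarrow\mathrm{Ext}^1(L,N(q))$ (the elementary transform at $\Gamma\cap l_q$); hence they recover $S$ only when $\tilde\Ff$ equals the image of our specific $\Ff$ under this map. This is exactly what the paper does: it takes $T$ to be the scroll determined by $S$ via that surjection, not a generic one, and then moves the projection center.

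The same issue undermines your smoothness argument for $S'$: you deduce very-ampleness of $\mathcal G$ from the genericity of $\tilde\Ff$, but once $\tilde\Ff$ is forced to equal the image of $\Ff$, no free genericity remains. The paper does not spell this point out either, relying instead on the hypothesis that $L(-q)$ is very ample (cf.\ Remark~\ref{rem:ciro35c}); in any case it cannot be obtained in the way you propose.
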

\begin{proof} Proposition \ref{prop:ciro53a} tells us that $S$ comes as a projection of a scroll
$T$ in $\Pp^{R+1}$ corresponding to a pair $(\Ff',C)$ with $\Ff'$ fitting in an exact sequence
$$0 \to N (q) \to \Ff' \to L \to 0.$$By moving the centre of projection along the ruling
of $T$ corresponding to $q$ and letting it go to the intersection with the
unisecant corresponding to $L$ and applying again Proposition \ref{prop:ciro53a},
we see that $S$ degenerates to a surface $S'$ as needed.
\end{proof}

\begin{remark}\label{rem:ciro35c} {\normalfont The hypotheses in Proposition \ref{prop:ciro53a} and Corollary
\ref{cor:ciro53b} are too strong. In fact, one only needs the existence of the exact sequence \eqref{eq:Fund}.
However, \eqref{eq:Fund3} (resp. \eqref{eq:Fund3b}) will not correspond in general to the minimal
section. Moreover, the assumption that $L(-q)$ is very ample in Corollary \ref{cor:ciro53b}
is not strictly necessary: in case this does not hold
the limit scroll $S'$ is simply not smooth. However, we will never need this later.
}
\end{remark}

%
\section{Special scrolls with general moduli}\label{SSGM}
%

From now on we will focus on smooth, linearly normal scrolls of degree
$d$, genus $g$ and speciality $ h^1 $, with general moduli.

In what follows we shall use the following result.

\begin{lemma}\label{lem:flam} Let $C$ be a smooth, projective curve of genus $g \geq 2$, with general moduli.
Let $L$ be a special line bundle on $C$ such that $|L|$ is a base-point-free $\g^r_m$, with $i:=
h^1(C, L) \geq 2$. Assume also that $[L] \in W^r_m(C)$ is general when the Brill-Noether number
\begin{equation}\label{eq:BNn}
\rho(g, L) := g - h^0(C,L) h^1(C,L)
\end{equation}is positive. Then the complete linear system $|\omega_C \otimes L^{\vee}|$ is base-point-free.
\end{lemma}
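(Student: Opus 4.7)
The plan is to translate base-point-freeness of $|\omega_C\otimes L^{\vee}|$ into a Brill--Noether statement, and then apply Brill--Noether theory on a curve of general moduli. By Serre duality, $h^0(\omega_C\otimes L^{\vee})=h^1(L)=i\geq 2$, so the system is non-empty. For $p\in C$, Serre duality also gives $h^0(\omega_C\otimes L^{\vee}(-p))=h^1(L(p))$, and from the short exact sequence $0\to L\to L(p)\to \C_p\to 0$ together with Riemann--Roch one checks that $h^1(L(p))=h^1(L)$ if and only if $h^0(L(p))=h^0(L)+1=r+2$, i.e. if and only if $L(p)\in W^{r+1}_{m+1}(C)$. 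Hence $p$ is a base point of $|\omega_C\otimes L^{\vee}|$ precisely when $L(p)\in W^{r+1}_{m+1}(C)$, and the lemma reduces to proving that no translate $L(p)$ lies in $W^{r+1}_{m+1}(C)$.

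A direct computation, using $i=g-m+r$, yields $\rho(g,r+1,m+1)=\rho(g,L)-i$. If $\rho(g,r+1,m+1)<0$, then by the existence part of Brill--Noether theory (Griffiths--Harris), on a curve of general moduli one has $W^{r+1}_{m+1}(C)=\emptyset$ and there is nothing to prove. Otherwise $\rho(g,r+1,m+1)\geq 0$, which forces $\rho(g,L)\geq i\geq 2>0$, so by hypothesis $L$ is general in $W^r_m(C)$. Consider the incidence
$$\tilde{Z}:=\{(L',p)\in W^r_m(C)\times C\,:\,L'(p)\in W^{r+1}_{m+1}(C)\},$$
and project it to $W^{r+1}_{m+1}(C)$ via $(L',p)\mapsto L'(p)$. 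The fibre over a line bundle $M$ consists of the pairs $(M(-p),p)$ with $p\in C$, and is isomorphic to $C$ since $h^0(M(-p))\geq r+1$ for every $p$. On a curve of general moduli, the Brill--Noether loci have expected dimension (Gieseker), so
$$\dim\tilde{Z}\leq \dim W^{r+1}_{m+1}(C)+1=\rho(g,L)-i+1\leq \rho(g,L)-1<\dim W^r_m(C),$$
using $i\geq 2$. The first projection $\tilde{Z}\to W^r_m(C)$ is therefore non-dominant, and the general $L\in W^r_m(C)$ lies outside its image, as required.

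The main technical point is the assumption $i=h^1(L)\geq 2$: it provides the strict inequality $\dim\tilde{Z}<\dim W^r_m(C)$ that makes the genericity of $L$ effective. With $i=1$ one would obtain only $\dim\tilde{Z}\leq \dim W^r_m(C)$, and the dimension count would fail, consistently with the well-known fact that for a pencil $|L|=\g^1_m$ the residual system $|\omega_C\otimes L^{\vee}|$ is in general not base-point-free. Beyond this, everything reduces to standard Brill--Noether bookkeeping and the dimension theorem for $W^s_n(C)$ on a curve of general moduli.
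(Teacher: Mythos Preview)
Your proof is correct and rests on the same Brill--Noether dimension count as the paper's. The only difference is packaging: the paper adds all $h$ base points of $|\omega_C\otimes L^{\vee}|$ at once, landing in $W^{r+h}_{m+h}(C)$ and bounding the fibres of the resulting rational map $W^r_m(C)\dasharrow W^{r+h}_{m+h}(C)$ via $\mathrm{Sym}^h(C)$, whereas you add a single point and use the incidence correspondence in $W^r_m(C)\times C$; in both versions the inequality $i\geq 2$ is exactly what makes the locus of bad $L$'s too small.
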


\begin{proof} Assume by contradiction that the base divisor of
$|\omega_C \otimes L^{\vee}|$ is $p_1, \ldots , p_h$, $ h \geq 1$. Then
$$|L + p_1 + \cdots + p_h|$$ is a $ \g^{r+h}_{m+h}$ on $C$; moreover the series
$|B|:= |K - L - p_1 - \cdots - p_h|$ is base-point-free. We have a rational map
$$\phi: W^{r}_m(C) \dasharrow W^{r+h}_{m+h}(C)$$which maps
the general $[L] \in W^{r}_m(C)$ to $[L + p_1 + \cdots + p_h] \in W^{r+h}_{m+h}(C)$. Let
$\Phi_L$ be the component of the fibre of $[L]$ via $\phi$ containing $[L]$
and let $[L']$ be a general point of $\Phi_L$. Hence
$L + p_1 + \cdots + p_h = L' + q_1 + \cdots + q_h$, for some $q_i \in C$, $1 \leq i \leq h$, which are
the base points of $|K - L'|$.  There is a rational map
$$\psi_L : \Phi_L \dasharrow {\rm Sym}^h(C)$$which maps $[L']$ to $q_1 + \cdots + q_h$. The map $\psi_L$
is clearly birational to its image. Therefore
$${\rm dim}(W^{r+h}_{m+h}(C)) \geq {\rm dim} ({\rm Im} (\phi)) \geq {\rm dim}(W^{r}_{m}(C)) - h =$$
$${\rm dim}(W^{r+h}_{m+h}(C)) + hi - h $$which is a contradiction to $i \geq 2$.
\end{proof}

The main result of this section is the following proposition.

\begin{proposition}\label{prop:tghilbh1}
Let $h^1$ and $g$ be positive integers as in \eqref{eq:BN1}. Let $d$ be as in Theorem \ref{thm:seg1e2} - (ii),
if $h^1 \neq 2$, whereas $d \geq 4g - 3$, if $h^1=2$.

Let $S \subset \Pp^R$ be a smooth, linearly normal, special scroll of degree $d$, genus
$g$, speciality $h^1$, with general moduli, and let $\Gamma$ be the unique, special section of $S$
as in Theorem \ref{thm:seg1e2}.

Let $m = {\rm deg}(\Gamma)$ and let $L$ be the line bundle on $C$ associated to $\Gamma$ as in \eqref{eq:Fund}.
Assume further that

\begin{itemize}
\item $L = \omega_C$, when $h^1 =1$,
\item $[L] \in W^h_m(C)$ is such that $|\omega_C \otimes L^{\vee}|$ is base-point-free (in particular
$[L] \in W^h_m(C)$ is general, when  $h^1 \geq 2$ and $\rho(g,L)$ as in \eqref{eq:BNn} is positive).
\end{itemize}

\noindent
If $\N_{S/\Pp^{R}}$ denotes the normal bundle of $S$ in $\Pp^{R}$, then:
\begin{itemize}
\item[(i)] $h^0( S, \N_{S/\Pp^{R}}) = 7(g-1) + (R +1) (R +1 - h^1)
+ (d-m-g+1)h^1 - (d-2m + g -1)$;
\item[(ii)] $h^1( S, \N_{S/\Pp^{R}}) = h^1(d-m-g+1) - (d-2m + g -1)$;
\item[(iii)] $h^2( S, \N_{S/\Pp^{R}}) = 0$.
\end{itemize}
\end{proposition}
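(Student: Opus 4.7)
The plan is to compute the cohomology of $\N := \N_{S/\Pp^{R}}$ by combining three short exact sequences on $S = F$: the normal bundle sequence
\begin{equation*}
0 \to T_S \to T_{\Pp^R}|_S \to \N \to 0,
\end{equation*}
the restricted Euler sequence $0 \to \Oc_S \to \Oc_S(1)^{\oplus(R+1)} \to T_{\Pp^R}|_S \to 0$, and the relative tangent sequence $0 \to T_{F/C} \to T_F \to \rho^*T_C \to 0$. From $K_F \sim -2H + (\omega_C \otimes \det\Ff)f$ one has $T_{F/C} \cong \Oc_F(2H - (\det\Ff) f)$, hence $\rho_*T_{F/C} \cong \operatorname{Sym}^2 \Ff \otimes (\det\Ff)^{\vee}$ with $R^i\rho_*T_{F/C} = 0$ for $i > 0$; combined with $R^i\rho_*\Oc_F = 0$ for $i > 0$, this reduces every cohomology group in sight to one on $C$.

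Part (iii) is then immediate: the Euler sequence gives $h^2(T_{\Pp^R}|_S) = 0$ from $h^2(\Oc_S(1)) = h^2(C,\Ff) = 0$ and $h^3(\Oc_S) = 0$; the normal bundle sequence, together with $h^3(T_S) = 0$, forces $h^2(\N) = 0$. Riemann--Roch applied to the three sequences yields
\begin{equation*}
\chi(\N) \;=\; \chi(T_{\Pp^R}|_S) - \chi(T_S) \;=\; (R+1)(R+1-h^1) + 7(g-1),
\end{equation*}
so (i) will follow from (ii) and (iii). Pushing the Euler and relative tangent sequences forward yields
$0 \to \Oc_C \to V^{\vee} \otimes \Ff \to \rho_*(T_{\Pp^R}|_S) \to 0$ with $V = H^0(\Ff)$, and $0 \to \operatorname{Sym}^2\Ff \otimes (\det\Ff)^{\vee} \to \rho_*T_F \to T_C \to 0$. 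The destabilizing sequence $0 \to N \to \Ff \to L \to 0$ induces a filtration on $\operatorname{Sym}^2\Ff \otimes (\det\Ff)^{\vee}$ with graded pieces $NL^{-1},\ \Oc_C,\ LN^{-1}$. By Theorem \ref{thm:seg1e2} one has $\Gamma^2 < 0$, hence $\deg(LN^{-1}) = 2m-d < 0$, so $h^0(LN^{-1}) = 0$ and $h^1(LN^{-1}) = d-2m+g-1$ by Riemann--Roch; the degree hypothesis on $d$ also makes $N$ non-special, giving $h^0(N) = d-m-g+1$ and $h^1(N) = 0$.

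The main obstacle is controlling the rank of the connecting map $\mu\colon H^1(\Oc_C) \to V^{\vee} \otimes H^1(\Ff)$ (and the analogous connecting map coming from the normal-bundle sequence on $H^1$). By Serre duality on $C$, $\mu$ is dual to the multiplication $V \otimes H^0(\omega_C \otimes \Ff^{\vee}) \to H^0(\omega_C)$. The non-speciality of $N$ gives $h^0(\omega_C \otimes N^{\vee}) = h^1(N) = 0$, so this map factors through $H^0(L) \otimes H^0(\omega_C \otimes L^{\vee}) \to H^0(\omega_C)$, where $H^0(\Ff) \twoheadrightarrow H^0(L)$ by linear normality of $\Gamma$ (Definition \ref{def:lndirec}). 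The hypothesis that $|\omega_C \otimes L^{\vee}|$ is base-point-free --- either because $L = \omega_C$ when $h^1 = 1$, or by Lemma \ref{lem:flam} applied to a general $[L] \in W^h_m(C)$ when $h^1 \ge 2$ --- is precisely what gives, via a base-point-free pencil trick argument, surjectivity of this Petri-style multiplication. Tracking the contribution of each graded piece of the filtration through the long exact sequences and isolating the resulting cokernel produces
\begin{equation*}
h^1(\N) \;=\; (d-m-g+1)\,h^1 - (d-2m+g-1),
\end{equation*}
whose two summands are $h^0(N)\cdot h^1(L)$ (the dimension of $\operatorname{Ext}^1(L,N)$, measuring deformations of the extension class defining $\Ff$) and $h^1(LN^{-1})$ (the rigidity contribution of the unique special section $\Gamma$). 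Part (i) then follows by adding $\chi(\N)$. The delicate point throughout is the bookkeeping that the extraneous contributions from the graded pieces cancel against the image of $\beta\colon H^1(T_S) \to H^1(T_{\Pp^R}|_S)$, leaving only the clean formula above.
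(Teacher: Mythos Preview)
Your overall framework --- pushing the three standard sequences down to $C$ and using the filtration on $\operatorname{Sym}^2\Ff\otimes(\det\Ff)^{\vee}$ --- is a reasonable alternative to the paper's approach, which instead restricts everything to the special section $\Gamma$ via the sequence $0\to\N_{\Gamma/S}\to\N_{\Gamma/\Pp^R}\to\N_{S/\Pp^R}|_\Gamma\to 0$ after first showing $h^i(\N_{S/\Pp^R}(-\Gamma))=0$ for $i=1,2$. But there is a genuine gap at the crucial step, and you have misidentified both which multiplication map is at stake and what property of it is needed.

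There are \emph{two} multiplication maps in play, and you have conflated them. The first is the Petri map $\mu_0\colon H^0(L)\otimes H^0(\omega_C L^{\vee})\to H^0(\omega_C)$, through which your $\mu^{\vee}$ factors. What you need here is \emph{injectivity}, not surjectivity: Gieseker--Petri gives injectivity for a curve with general moduli, and that is what pins down $\operatorname{rank}\mu=(h+1)h^1$, hence $h^1(T_{\Pp^R}|_S)=(R-h)h^1=h^0(N)h^1(L)$. The Petri map is essentially never surjective when $\rho(g,L)>0$, and the base-point-free pencil trick computes a kernel, not a cokernel; so your invocation of base-point-freeness here is misplaced. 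The paper uses Gieseker--Petri at the analogous spot (its Claim computing $h^1(\N_{\Gamma/\Pp^R})$).

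The second map, which you never write down, is the one that actually requires the hypothesis that $|\omega_C\otimes L^{\vee}|$ is base-point-free. In your language it governs $\operatorname{rank}\beta$: once you identify $H^1(T_{\Pp^R}|_S)^{\vee}\cong H^0(N)\otimes H^0(\omega_C L^{\vee})$ via the above, the dual of $\beta$ factors through the multiplication
\[
H^0(N)\otimes H^0(\omega_C\otimes L^{\vee})\longrightarrow H^0(\omega_C\otimes N\otimes L^{\vee}),
\]
and it is \emph{surjectivity of this map} that forces $\operatorname{rank}\beta=h^1(LN^{-1})=d-2m+g-1$ and hence the formula for $h^1(\N)$. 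The paper proves exactly this surjectivity (its Claim \ref{cl:ciro1}), using Butler's lemma when $h^1\geq 3$ and Castelnuovo's lemma when $h^1=2$; for $h^1=1$ the map is the identity since $L=\omega_C$. Your sentence ``tracking the contribution of each graded piece \ldots\ produces'' is standing in for this entire argument, and the identification of $\beta^{\vee}$ with the displayed multiplication is not automatic --- it is the heart of the computation.
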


\begin{proof}[Proof of Proposition \ref{prop:tghilbh1}] First notice that $4g-3 \geq
\frac{7g-\epsilon}{2} - 2 h^1 + 2$, so that Theorem \ref{thm:seg1e2}, Part (ii), can be applied also to the case
$h^1=2$.

First, we prove (iii).  Since $S$ is linearly normal, from the Euler sequence we get:
$$ \cdots \to H^0(\Oc_S(H))^{\vee} \otimes H^2(\Oc_S(H)) \to H^2 (\T_{\Pp^R}|_S) \to 0;$$since
$S$ is a scroll, then $h^2(\Oc_S(H)) = 0$, which implies $h^2 (\T_{\Pp^R}|_S)= 0$.
Thus (iii) follows from using the tangent sequence
\begin{equation}\label{eq:tang}
0 \to \T_S \to \T_{\Pp^{R}}|_S \to \N_{S/\Pp^{R}} \to 0.
\end{equation}

Since $S$ is a scroll,
\begin{equation}\label{eq:tgS}
\chi(\T_S) = 6 - 6g.
\end{equation}From the Euler sequence, since $S$ is linearly
normal, we get
\begin{equation}\label{eq:tgS2}
\chi(\T_{\Pp^{R}}|_S) = (R +1) (R +1 - h^1) + g-1.
\end{equation}Thus, from (iii) and
from \eqref{eq:tgS}, \eqref{eq:tgS2} we get
\begin{equation}\label{eq:tgS3bis}
\chi(\N_{S/\Pp^{R}}) = h^0(\N_{S/\Pp^{R}}) - h^1(\N_{S/\Pp^{R}})=
7(g-1) + (R +1) (R +1 - h^1).
\end{equation}

The rest of the proof will be concentrated on the computation of $h^1(\N_{S/\Pp^{R}})$.

\begin{claim}\label{cl:ciro1502} One has $h^1(\N_{S/\Pp^{R}} (-\Gamma)) = h^2(\N_{S/\Pp^{R}} (-\Gamma))  = 0$.
In other words,
\begin{equation}\label{eq:tgS15}
h^1(\N_{S/\Pp^{R}}) = h^1(\N_{S/\Pp^{R}}|_{\Gamma}).
\end{equation}
\end{claim}

\begin{proof}[Proof of Claim \ref{cl:ciro1502}] Look at the exact
sequence
\begin{equation}\label{eq:ciro2}
0 \to \N_{S/\Pp^{R}} (-\Gamma) \to  \N_{S/\Pp^{R}}
\to \N_{S/\Pp^{R}}|_{\Gamma} \to 0.
\end{equation} From \eqref{eq:tang} tensored by $\Oc_S(-\Gamma)$ we see that $h^2(\N_{S/\Pp^{R}} (-\Gamma)) = 0$
follows from $h^2( \T_{\Pp^R}|_S (-\Gamma)) = 0$ which, by the Euler sequence, follows from $h^2(\Oc_S(H - \Gamma)) =
h^0(\Oc_S(K_S - H + \Gamma)) = 0$,
since $K_S - H + \Gamma$ intersects the ruling of $S$ negatively.

As for $h^1(\N_{S/\Pp^{R}} (-\Gamma))= 0$, this follows from $h^1( \T_{\Pp^R}|_S (-\Gamma)) = h^2( \T_S (-\Gamma)) = 0$.
By the Euler sequence, the first vanishing follows from $h^2(\Oc_S(-\Gamma)) = h^1(\Oc_S(H-\Gamma))=0$.
Since $K_S+\Gamma$ meets the ruling negatively, one has $h^0(\Oc_S(K_S +\Gamma)) = h^2(\Oc_S(-\Gamma)) =0$. Moreover,
Theorem \ref{thm:seg1e2} implies $h^1(\Oc_S(H-\Gamma)) = h^1(C, N)=0$.

In order  to prove $h^2( \T_S (-\Gamma)) = 0$, consider the exact sequence
\begin{equation}\label{eq:tgrel}
0 \to \T_{rel} \to \T_S \to \rho^*(\T_C) \to 0
\end{equation}arising from the structure morphism
$S\cong F = \Pp(\Ff) \stackrel{\rho}{\to} C$. The vanishing we need follows from
$h^2( \T_{rel} \otimes \Oc_S(-\Gamma)) = h^2 (\Oc_S(-\Gamma) \otimes \rho^*(\T_C)) = 0$. The first vanishing
holds since $\T_{rel} \cong \Oc_S (2H - df)$ and therefore, $\Oc_S(K_S + \Gamma) \otimes \T_{rel}^{\vee}$ restricts
negatively to the ruling. Similar considerations also yield the second vanishing.

\end{proof}

Next, we want to compute $h^1( \Gamma, \N_{S/\Pp^{R}}|_{\Gamma} )$. To this aim,
consider the exact sequence
\begin{equation}\label{eq:tgS19}
0 \to \N_{\Gamma/S} \to  \N_{\Gamma/\Pp^{R}}\to \N_{S/\Pp^{R}}|_{\Gamma} \to 0.
\end{equation}First we compute $h^1(\N_{\Gamma/\Pp^{R}})$ and $h^1(\N_{\Gamma/S} )$.

\begin{claim}\label{cl:flam1} One has
\begin{equation}\label{eq:tgS00}
h^1(\N_{\Gamma/S}) = d - 2m + g - 1
\end{equation}and
\begin{equation}\label{eq:tgS26}
h^1( \N_{\Gamma/\Pp^R})  = (d - g  - m + 1) h^1.
\end{equation}

\end{claim}

\begin{proof}[Proof of Claim \ref{cl:flam1}] From \eqref{eq:Ciro410b},
$${\rm deg}(\N_{\Gamma/S}) =  \Gamma^2 = 2m -d <0$$by Theorem \ref{thm:seg1e2}.
Thus,$$h^0(\Oc_{\Gamma}(\Gamma)) = 0, \;\;\;\; h^1(\Oc_{\Gamma}(\Gamma)) = d - 2m + g - 1$$which
gives \eqref{eq:tgS00}.

To compute $h^1(\N_{\Gamma/\Pp^{R}})$ we use the fact that $S$ is a scroll with general moduli.
First, consider $\Gamma \subset \Pp^h$ and the Euler sequence of $\Pp^h$ restricted to $\Gamma$.
By taking cohomology and by dualizing, we get
$$0 \to H^1( \T_{\Pp^h}|_{\Gamma})^{\vee} \to H^0 (\Oc_{\Gamma}(H)) \otimes H^0(\omega_{\Gamma} (-H)) \stackrel{\mu_0}{\to}
H^0(\omega_{\Gamma}),$$where $\mu_0$ is the usual Brill-Noether map of $\Oc_{\Gamma}(H)$.
Since $\Gamma \cong C$ and since $C$ has general moduli, then
$\mu_0$ is injective by Gieseker-Petri (cf. \cite{ACGH})
so
\begin{equation}\label{eq:tgS22}
h^1( \T_{\Pp^h}|_{\Gamma})= 0.
\end{equation}From the exact sequence
$$0 \to \T_{\Gamma} \to  \T_{\Pp^h}|_{\Gamma}
\to \N_{\Gamma/\Pp^h} \to 0$$ we get
\begin{equation}\label{eq:tgS24}
h^1( \N_{\Gamma/\Pp^h})= 0.
\end{equation}

From the inclusions $$\Gamma \subset \Pp^h \subset \Pp^R$$we have the sequence
\begin{equation}\label{eq:tgS25}
0 \to \N_{\Gamma/\Pp^{h}} \to \N_{\Gamma/\Pp^{R}} \to \N_{\Pp^h/\Pp^{R}}|_{\Gamma} \cong \oplus_{i=1}^{R-h} \Oc_{\Gamma}
(H) \to 0.
\end{equation}By \eqref{eq:tgS24}, \eqref{eq:tgS25}, we have
\begin{displaymath}
\begin{array}{rcl}
h^1( \N_{\Gamma/\Pp^R}) & = & (R-h) h^1(\Oc_{\Gamma} (H)) = (d - 2g + 1 + h^1 - h) h^1 \\
 & = & (d - g  - m + 1) h^1,
\end{array}
\end{displaymath}proving \eqref{eq:tgS26}.

\end{proof}

Next we show that $$h^1(\N_{S/\Pp^{R}}|_{\Gamma}) = h^1(\N_{\Gamma/\Pp^{R}}) - h^1(\N_{\Gamma/S}).$$

\begin{claim}\label{cl:ciro1} The map
$$H^0(\Gamma, \N_{\Gamma/\Pp^{R}} )  \to  H^0( \Gamma, \N_{S/\Pp^{R}}|_{\Gamma} ),$$
coming from the exact sequence \eqref{eq:tgS19} is surjective.
\end{claim}

\begin{proof}[Proof of Claim \ref{cl:ciro1}] To show this surjectivity is equivalent to showing
the injectivity of the map
$$H^1 (\N_{\Gamma/S}) \to H^1(\N_{\Gamma/\Pp^{R}})$$or equivalently, from \eqref{eq:Ciro410b},
the surjectivity of the dual map
$$H^0(\omega_{\Gamma} \otimes \N_{\Gamma/\Pp^{R}}^{\vee}) \to H^0(\omega_{\Gamma} \otimes \N_{\Gamma/S}^{\vee})
\cong H^0(\omega_C \otimes N \otimes L^{\vee}).$$

From \eqref{eq:Fund} and the non-speciality of $N$, we get
$$0 \to H^0(L)^{\vee} \to H^0(\Ff)^{\vee} \to H^0(N)^{\vee} \to 0.$$Since
$H^0(\Oc_S(H)) \cong H^0(C, \Ff)$ and $\Oc_{\Gamma}(H) \cong L$, the Euler sequences
restricted to $\Gamma$ give the following commutative diagram:

\begin{displaymath}
\begin{array}{ccccccc}
      &             &     &     0                       &     &     0     &     \\
       &             &     &     \downarrow                        &     &     \downarrow      &     \\
0 \to & \Oc_{\Gamma} & \to & H^0(L)^{\vee} \otimes \Oc_{\Gamma}(H) & \to & \T_{\Pp^h}|_{\Gamma} & \to 0  \\
      &      ||       &     &     \downarrow                        &     &     \downarrow      &     \\
0 \to & \Oc_{\Gamma} & \to & H^0(\Ff)^{\vee} \otimes \Oc_{\Gamma}(H) & \to & \T_{\Pp^R}|_{\Gamma} & \to 0  \\
     &             &     &     \downarrow                        &     &     \downarrow      &     \\
 &  &  & H^0(N)^{\vee} \otimes \Oc_{\Gamma}(H) & \stackrel{\cong}{\to} & \N_{\Pp^h/\Pp^{R}}|_{\Gamma} & \\
          &             &     &     \downarrow                        &     &     \downarrow      &     \\
     &             &     &     0                       &     &     0     &

\end{array}
\end{displaymath}which shows that $\N_{\Pp^h/\Pp^{R}}|_{\Gamma} \cong H^0(N)^{\vee} \otimes \Oc_{\Gamma}(H)$ (cf. also \eqref{eq:tgS25}).

From \eqref{eq:tgS24}, \eqref{eq:tgS25} and the above identification, we have
$$H^1(\N_{\Gamma/\Pp^{R}}) \cong H^0(C, N)^{\vee} \otimes H^1(C, L)$$i.e.
$$H^0(C, N) \otimes H^0(C, \omega_C \otimes L^{\vee})\cong H^0(\omega_{\Gamma} \otimes \N_{\Gamma/\Pp^{R}}^{\vee}).$$By taking into account \eqref{eq:tgS19} and \eqref{eq:tgS25}, we get
a commutative diagram
\begin{equation}\label{eq:flammaggio}
\begin{array}{rccl}
H^0(N) \otimes H^0(\omega_C \otimes L^{\vee}) &  \stackrel{\cong}{\longrightarrow}& H^0(\omega_{\Gamma} \otimes \N_{\Gamma/\Pp^{R}}^{\vee}) &  \\
  & \searrow & \downarrow & \\
  & & H^0(\omega_C \otimes N \otimes L^{\vee}) & \cong H^0(\omega_{\Gamma} \otimes \N_{\Gamma/S}^{\vee}).
  \end{array}
\end{equation}Therefore, if the diagonal map is surjective then also the vertical one will be surjective.

There are two cases.

\noindent
$\bullet$ If $h^1=1$, then $L=\omega_C$ and the diagonal map is the identity; so we are done in this case.

\noindent
$\bullet$ If $h^1(L) \geq 3$, the surjectivity of the diagonal map
follows by applying \cite[Lemma 2.9]{Butler}. In the notation there,
$L_1 = \omega_C \otimes L^{\vee}$, $L_2 = N$, $p = 2g + 2 + 2m - d$ and
the conditions (ii) and (iii) in \cite[Lemma 2.9]{Butler}
ensuring the surjectivity follow from the hypothesis on $d$.

\noindent
$\bullet$ If $h^1(L) = 2$ the required surjectivity follows as above
by using Castelnuovo's Lemma (cf. \cite[Theorem 2]{Mumford} rather than \cite{Butler}. Indeed,
$|\omega_C \otimes L^{\vee}|$ is base-point-free and $h^1(N \otimes L\otimes \omega_C^{\vee}) = 0$
since $\deg( N \otimes L\otimes \omega_C^{\vee} ) = d - 2g + 2 \geq 2g-1$. In particular, for
$[L] \in W^r_m(C)$ general, one can conclude by using Lemma \ref{lem:flam}.

\end{proof}

From  Claims \ref{cl:ciro1502} and  \ref{cl:ciro1}, we get
$$h^1(\N_{S/\Pp^{R}}) = h^1(\N_{S/\Pp^{R}}|_{\Gamma} )  = h^1(\N_{\Gamma/\Pp^{R}}) - h^1(\N_{\Gamma/S} ).$$ Thus,
from Claim \ref{cl:flam1},  we get
$$h^1( \N_{S /\Pp^R})=  (d  - m - g + 1)h^1 - (d - 2m + g - 1)$$which is (ii) in the statement.
By using \eqref{eq:tgS3bis} and (ii), we get (i).
This completes the proof of Proposition \ref{prop:tghilbh1}.
\end{proof}

\begin{remark}\label{rem:tghilbh1} From \eqref{eq:tgS3bis}, we have:
$$\chi(\N_{S/\Pp^{R}}) = 7(g-1) + (R +1) (R +1 - h^1) = 7(g-1) + (R+1)^2 - h^1(L) h^0(\Ff).$$
Thus, formula (i) reads as
$$h^0(S, \N_{S/\Pp^{R}}) = \chi(\N_{S/\Pp^{R}}) + h^1(L) h^0(N) - \chi(N \otimes L^{\vee}) - 2 (g-1),$$or equivalently
\begin{equation}\label{eq:contoesplicito}
h^0(S, \N_{S/\Pp^{R}}) = 5(g-1) + (R+1)^2 - h^1(L) h^0(L) - \chi(N \otimes L^{\vee}).
\end{equation}
\end{remark}

\begin{remark}\label{rem:rinvio} The proof of Claim \ref{cl:ciro1} shows that if $|\omega_C \otimes L^{\vee}|$
has $t$ base-points then $$h^1(\N_{S/\Pp^{R}}) = h^1(\N_{\Gamma/\Pp^{R}}) - h^1(\N_{\Gamma/S}) + t$$and therefore
$$h^0(\N_{S/\Pp^{R}}) = 7(g-1) + (R +1) (R +1 - h^1)
+ (d-m-g+1)h^1 - (d-2m + g -1) + t.$$

\end{remark}

\begin{remark}  If one applies different surjectivity criteria for multiplication maps,
one may also have different ranges in which the previous proposition holds. For instance, by applying
\cite[Theorem 1]{Butler}, one may prove that the conclusion in Proposition \ref{prop:tghilbh1} holds
if $d > 4g-4h^1$ and $g > 6h^1 -2$.

\end{remark}

\begin{remark}\label{rem:conclusion} A very interesting question is to look for components of the Hilbert
scheme whose general point corresponds to a smooth, linearly normal, special scroll with general moduli
corresponding to a stable vector bundle. Such components are related to
irreducible components of Brill-Noether loci in $U_C(d)$, the moduli space
of degree $d$, semistable, rank-two vector bundles on $C$,
a curve of genus $g$ with general moduli.
There are several open questions on this subject: as a reference, see e.g. the overview in \cite{Tex}.
\end{remark}

\section{Hilbert schemes of linearly normal, special scrolls}\label{S:HSS}

From now on, we will denote by $ {\rm Hilb} (d,g,h^1)$ the open subset of the Hilbert
scheme parametrizing smooth
scrolls in $\Pp^R$ of genus $g \geq 3$, degree
$d \geq 2g+2$ and speciality $ h^1$, with $0 < h^1 <g$ and $R = d-2g + 1 + h^1$ as in \eqref{eq:R}.

Theorem \ref{thm:seg1e2} can be used to
describe the irreducible components of
${\rm Hilb}(d,g,h^1)$ whose general point represents a smooth,
scroll with general moduli. We will denote by $\HH_{d,g,h^1}$ the union of these components.

\begin{theorem}\label{thm:hilschh1}
Let $g \geq 3$, and $h^1>0$ be integers as in \eqref{eq:BN1}. Let
$d \geq \frac{7g-\epsilon}{2} - 2h^1 + 2$, where
$ 0 \leq \epsilon \leq 1$ and $\epsilon \equiv g \mod{2}$, if
$h^1 \neq 2$, whereas $d \geq 4g -3$, if $h^1 = 2$.
Let $m$ be any integer such that either
\begin{equation}\label{eq:nettu}
m= 4, \; {\rm if} \; g=3, h^1 = 1, \;\;\; {\rm or} \;\;\;
g + 3 - h^1 \leq m \leq \overline{m}:= \lfloor \frac{g}{h^1} -1 \rfloor + g - h^1, \;
{\rm otherwise}.
\end{equation}

\noindent
(i) If $h^1 =1$ then ${\rm Hilb}(d,g,1)$ consists of a unique, irreducible
component $\HH^{2g-2}_{d,g,1}$ whose general point parametrizes a smooth, linearly normal and special
scroll $S \subset \Pp^{R}$, $R = d-2g+2$, whose special section
is a canonical curve. Furthermore,
\begin{itemize}
\item[(1)] $dim(\HH^{2g-2}_{d,g,1}) = 7(g-1) + (d-2g+3)^2 - (d-2g+3)$,
\item[(2)] $\HH^{2g-2}_{d,g,1}$ is generically smooth and dominates ${\mathcal M}_g$.
\end{itemize}Moreover,
scrolls with speciality $1$, whose special section is not canonical of degree $m < 2g-2$
fill up an irreducible
subscheme of $\HH^{2g-2}_{d,g,1}$ which also dominates ${\mathcal M}_g$ and whose codimension
in $\HH^{2g-2}_{d,g,1}$ is $2g-2-m$.

\vskip 3pt

\noindent
(ii) If $ h^1 \geq 2$ then, for any $g \geq 4h^1$ and for any $m$ as in
\eqref{eq:nettu}, ${\rm Hilb}(d,g,h^1)$ contains
a unique, irreducible component $\HH_{d,g,h^1}^m$ whose general point parametrizes a smooth, linearly
normal and special scroll  $S \subset \Pp^{R}$, $R = d - 2g + 1 + h^1$,
having general moduli whose special section
$\Gamma$ has degree $m$ and speciality $h^1$. Furthermore,

\begin{itemize}
\item[(1)] $dim(\HH_{d,g,h^1}^m) = 7(g-1) + (R +1) (R +1 - h^1)
+ (d-m-g+1)h^1 - (d-2m + g -1),$
\item[(2)] $\HH_{d,g,h^1}^m$ is generically smooth.
\end{itemize}
\end{theorem}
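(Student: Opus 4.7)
The plan is to match, for each admissible value of $m$, a concrete parameter family of scrolls against the upper bound on the tangent dimension supplied by Proposition \ref{prop:tghilbh1}. Since Proposition \ref{prop:tghilbh1}(iii) gives $h^2(S, \N_{S/\Pp^R}) = 0$, at any point of the Hilbert scheme where the tangent dimension equals $h^0(\N_{S/\Pp^R})$ the scheme is smooth of exactly that dimension. It thus suffices to exhibit, through each such $S$, an irreducible family of dimension $h^0(\N_{S/\Pp^R})$ as in Proposition \ref{prop:tghilbh1}(i).

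The parameter variety $\mathcal{V}^m$ is the space of data $(C, L, N, [e])$ with $[C]\in \M_g$ general, $L$ a general element of $W^h_m(C)$ where $h = m - g + h^1$ (with $L = \omega_C$ forced in the case $h^1 = 1$, $m = 2g - 2$), $N$ a general line bundle on $C$ of degree $d - m$, and $[e]\in \Pp({\rm Ext}^1(L, N))$ a non-zero extension class (or the split class when ${\rm Ext}^1 = 0$). This is irreducible by construction since each factor is irreducible under the genericity hypotheses. The resulting rank-two bundle $\Ff$ satisfies Assumption \ref{ass:1}, and Theorem \ref{thm:seg1e2} together with Proposition \ref{prop:cases} guarantee that for generic data $\Ff$ is very ample and yields a smooth, linearly normal scroll $S \subset \Pp^R$ whose unique special section has degree $m$ and corresponds to $L$ via \eqref{eq:Fund}. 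Varying the projective embedding by $PGL(R+1)$ gives a dominant rational map $\mathcal{V}^m \times PGL(R+1) \dashrightarrow \HH^m_{d, g, h^1}$ onto an irreducible subscheme of ${\rm Hilb}(d, g, h^1)$, which will be shown to be the candidate component.

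The dimension of $\mathcal{V}^m$ is $3g - 3 + \rho(g, L) + g + \max(0, h^1(N \otimes L^\vee) - 1)$, where $\rho(g, L) = g - (h+1)h^1$ is the Brill-Noether number of $L$. Adding $\dim PGL(R+1) = (R+1)^2 - 1$ and subtracting $\dim({\rm Aut}(\Ff)/\C^*)$ — which is computed from $H^0(N\otimes L^\vee)$ and $H^0(L\otimes N^\vee)$ — a direct arithmetic manipulation matches the explicit value of $h^0(\N_{S/\Pp^R})$ from Proposition \ref{prop:tghilbh1}(i). Combined with $h^2 = 0$, this forces both the dimension formula and generic smoothness. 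Uniqueness of the special section in Theorem \ref{thm:seg1e2}, combined with the upper semi-continuity of the minimal section degree, then shows that distinct values of $m$ produce distinct irreducible components of ${\rm Hilb}(d, g, h^1)$; the degeneration of Corollary \ref{cor:ciro53b} is consistent with this, since it can only lower $m$.

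For part (i) the argument specializes: for $h^1 = 1$ one has $\rho(g,L)=0$, and when $m = 2g - 2$ Clifford's inequality forces $L = \omega_C$, yielding the main component $\HH^{2g - 2}_{d, g, 1}$ with the stated dimension and generic smoothness; dominance over $\M_g$ is immediate from the moduli factor of $\mathcal{V}^{2g-2}$. Scrolls with special section of degree $m < 2g - 2$ form an irreducible sub-locus inside $\HH^{2g - 2}_{d, g, 1}$, corresponding to the additional constraint $L \in W^{m - g + 1}_m(C)$ with speciality $1$; a Brill-Noether dimension count inside $\Pic^m(C)$ gives the expected codimension $2g - 2 - m$ claimed in the statement. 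The main obstacle I expect is the dimension bookkeeping in the third paragraph: in the decomposable regime $\Ff = L \oplus N$ the extension class contributes zero dimensions while ${\rm Aut}(\Ff)$ picks up all of $H^0(N \otimes L^\vee)$, whereas in the non-decomposable regime the extension class contributes $h^1(N \otimes L^\vee) - 1$ dimensions and ${\rm Aut}(\Ff)/\C^*$ is minimal; verifying that both regimes yield the same final numerical answer reduces to a single Riemann-Roch identity for $N \otimes L^\vee$ of degree $d - 2m$.
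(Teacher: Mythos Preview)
Your approach is the paper's: build an irreducible parameter family $\mathcal{V}^m \times PGL(R+1)$, compute its dimension via the automorphism count (the paper's Lemma \ref{lem:step2}), and match against $h^0(\N_{S/\Pp^R})$ from Proposition \ref{prop:tghilbh1} to conclude that $\HH^m_{d,g,h^1}$ is a generically smooth component. Two corrections: first, the tangent space to the Hilbert scheme \emph{is} $H^0(\N)$, so your opening sentence about $h^2=0$ is circular --- what does the work is simply that the family dimension meets the tangent dimension. Second, the minimal section degree is \emph{lower} semicontinuous (Corollary \ref{cor:ciro53b} lowers it), not upper; but you do not need semicontinuity at all, since for $h^1\geq 2$ the dimension matching already shows each $\HH^m_{d,g,h^1}$ is a full component, and their general points are distinguished by $m$ by construction.

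The genuine gap is in part (i). You assert that for $h^1=1$ the loci with $m<2g-2$ sit inside $\HH^{2g-2}_{d,g,1}$, but your Brill--Noether count only gives their dimension, not the containment. The paper establishes containment precisely via Corollary \ref{cor:ciro53b}: iterating the degeneration $L\mapsto L(-q)$ from $L=\omega_C$ shows every $\HH^m_{d,g,1}$ lies in the closure of $\HH^{2g-2}_{d,g,1}$. You invoke Corollary \ref{cor:ciro53b} only as a consistency check in the $h^1\geq 2$ discussion, but it is the engine of the $h^1=1$ argument. Note moreover that Proposition \ref{prop:tghilbh1} does not apply when $h^1=1$ and $m<2g-2$ (its hypothesis forces $L=\omega_C$ in that case), so you cannot run the dimension-matching argument there directly --- this is exactly why the degeneration is indispensable, and why for $h^1=1$ the smaller-$m$ loci are proper subschemes rather than components.
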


\begin{remark}\label{rem:boundsm} Let us comment on the bounds on $m$ in \eqref{eq:nettu}.
From Theorem \ref{thm:seg1e2}, $S$ contains a unique,
special section $\Gamma$ of speciality $h^1$, which is the image of $C$ via a complete linear series
$|L|$, which is a $\g^h_m$. In order to have $C$ with general moduli, by Lemma \ref{lem:maxdeg}, $m \leq \overline{m} =
\lfloor \frac{g}{h^1} -1 \rfloor + g - h^1$. This condition is empty if $h^1=1$.

On the other hand, since $S$ is smooth, we have $h= m - g + h^1 \geq 2$. If $h^1 \geq 2$ and the scroll has general
moduli, then $h \geq 3$ by \eqref{eq:BN1}. If $h^1=1$ and $h=2$, then $m=4$ and $g=3$.

\end{remark}

\begin{remark}\label{rem:hilschh1} As a consequence of Theorem \ref{thm:hilschh1}, ${\rm Hilb}(d,g,h^1)$ is reducible
as soon as
$h^1 \geq 2$, and  when $h^1\geq 3$ it is  also not equidimensional. Indeed, the component of maximal dimension
 is $\HH_{d,g,h^1}^{g + 3 - h^1}$ whereas the component of minimal dimension is $\HH_{d,g,h^1}^{\overline{m}}$,
with $\overline{m}$ as in \eqref{eq:nettu}.

By Proposition \ref{prop:cases}, the scrolls  in $\HH_{d,g,h^1}^m$ correspond
to unstable bundles.

\end{remark}

\begin{proof}[Proof of Theorem \ref{thm:hilschh1}] The proof is in several steps.

\vskip 5pt

\noindent {\bf Step 1. Construction of $\HH_{d,g,h^1}^m$.}
%
As usual, for any smooth, projective curve $C$ of genus $g$ and for any integer $m$, denote by
$W^h_m(C)$ the subscheme of ${\rm Pic}^m(C)$
consisting of special, complete linear series $|A|$ on $C$ such that
${\rm deg}(A)=m$ and $h^0(C,A) \geq h+1$ and by $G^h_m(C)$ the scheme
parametrizing special $\g^h_m$'s on $C$ (see, e.g., \cite{ACGH}). Consider the morphism
$G^h_m(C) \to W^h_m(C)$.

If $C$ has general moduli, this map is birational and
$G^h_m (C)$ is smooth (see, e.g \cite{ACGH}). However, for any curve $C$, if $h= m - g +1$ and if
the general element of  a component $Z$ of $G^h_m (C)$ has index of speciality $1$, again the map
is birational and $Z$ is birational to ${\rm Sym}^{2g-2-m}(C)$, hence it is generically smooth. In any case,
for any  $m $ as in \eqref{eq:nettu},
$${\rm dim} (W^h_m (C)) = {\rm dim} (G^h_m (C)) = \rho(g,h,m)= g - (h+1) (h-m+g) \geq 0.$$

Let ${\mathcal M}^0_g$ be the Zariski open subset of the moduli space ${\mathcal M}_g$, whose points
correspond to equivalence classes of curves of genus $g$ without non-trivial automorphisms.
By definition, ${\mathcal M}^0_g$ is a fine moduli space, i.e.\ we have a universal family
$p : {\mathcal C} \to {\mathcal M}^0_g$, where ${\mathcal C}$ and ${\mathcal M}^0_g$ are smooth schemes and
$p$ is a smooth morphism. ${\mathcal C} $ can be identified with the Zariski open
subset ${\mathcal M}^0_{g,1}$ of the moduli space ${\mathcal M}_{g,1}$ of smooth, pointed, genus $g$
curves, whose points correspond to equivalence classes of pairs $(C,x)$, with $x \in C$ and $C$ a smooth
curve of genus $g$ without non-trivial automorphisms. On  ${\mathcal M}^0_{g,1}$ there is again a universal family
$p_1 : {\mathcal C}_1 \to {\mathcal M}^0_{g,1}$, where
${\mathcal C}_1 = {\mathcal C} \times_{{\mathcal M}^0_g} {\mathcal C}$. The family $p_1$ has a natural regular
global section $\delta$ whose image is the diagonal. By means of $\delta$, for any integer $k$, we have
the universal family of Picard varieties of order $k$,
i.e.\ $$p_1^{(k)} : {\mathcal Pic}^{(k)} \to {\mathcal M}^0_{g,1}$$
(cf.\ \cite[\S\,2]{Ciro}), with a Poincar\`e line-bundle on $
{\mathcal C}_1 \times_{{\mathcal M}^0_{g,1}}{\mathcal Pic}^{(k)}$ (cf. a relative version
of \cite[p. 166-167]{ACGH}).  For any closed point
$[(C,x)] \in {\mathcal M}^0_{g,1}$, its fibre via $p_1^{(k)}$ is isomorphic to ${\rm Pic}^{(k)}(C)$.

Then, one can consider the map
$${\mathcal W}^h_m \stackrel{p_1^{(m)}}{\longrightarrow} {\mathcal M}^0_{g,1},$$with
${\mathcal W}^h_m$ a subscheme of  ${\mathcal Pic}^{(m)}$: for any closed point
$[(C,x)] \in {\mathcal M}^0_{g,1}$, its fibre via
$p_1^{(m)} $ is isomorphic to $ W^h_m(C)$. One has:

\noindent
(a) if $\rho(g,h,m) >0$, by a result of Fulton-Lazarsfeld in \cite{FL},
$W^h_m(C)$ is irreducible and generically smooth of dimension $\rho(g,h,m)$
for $[C] \in {\mathcal M}_g$ general. Moreover, for $[L] \in W^h_m(C)$ a smooth point,
$h^0(C,L) = h+1$. Therefore, there is
a unique, reduced component ${\mathcal W}$ of ${\mathcal W}^h_m $ which dominates ${\mathcal M}^0_{g,1}$ via the
map $p_1^{(m)}$.

\noindent
(b) If $\rho(g,h,m) =0$, from \cite[Theorem 1]{EH}, again there is a unique, reduced component ${\mathcal W}$ of
${\mathcal W}^h_m $ dominating ${\mathcal M}^0_{g,1}$. For $[C,x] \in {\mathcal M}^0_{g,1}$ general,
the fibre of ${\mathcal W} \longrightarrow {\mathcal M}^0_{g,1}$ is $W^h_m(C)$ which is a finite number of
points $[L]$ with $h^0(C,L) = h+1$.

Let $${\mathcal Y}_{h,m} := {\mathcal W}
\times_{{\mathcal M}_{g,1}^0} {\mathcal Pic}^{(d-m)},$$which is irreducible.
For $y \in {\mathcal Y}_{h,m}$ general, then $y = (L,N) \in W^h_m(C) \times Pic^{d-m}(C)$ for
$[(C,x)] \in {\mathcal M}^0_{g,1}$ general.

Note also the existence of Poincar\`e line bundles ${\mathcal L}$ and ${\mathcal N}$ on
${\mathcal Y}_{h,m} \times_{{\mathcal M}_{g,1}^0} {\mathcal C}_1 \stackrel{\pi_{h,m}}{\longrightarrow}
{\mathcal Y}_{h,m}$: for $y \in {\mathcal Y}_{h,m}$ general, corresponding to
$(L,N) \in W^h_m(C) \times Pic^{d-m}(C)$ with $[(C,x)] \in {\mathcal M}^0_{g,1}$ general, the restriction of
$\mathcal L$ (respectively, $\mathcal N$) to $\pi_{h,m}^{-1} (y) \cong C$ is $L$ (respectively, $N$).
Hence, we can consider ${\mathcal R}_{h,m} := R^1 (\pi_{h,m})_*(\mathcal N \otimes \mathcal L^{\vee})$ on
${\mathcal Y}_{h,m}$.

There is a dense, open subset $\mathcal U_{h,m} \subset {\mathcal Y}_{h,m}$ on which
the sheaf ${\mathcal R}_{h,m}$ is locally-free and therefore it gives rise to
a vector bundle $ \mathcal E_{h,m}$ whose rank we denote by
$t$: for a general point $y \in \mathcal U_{h,m}$, corresponding to
$(L,N) \in W^h_m(C) \times Pic^{d-m}(C)$, with $[(C,x)] \in {\mathcal M}^0_{g,1}$ general, the fibre of
${\mathcal E}_{h,m}$ on $y$ is $H^1(C, N \otimes L^{\vee}) \cong {\rm Ext}^1(L,N)$.

Note that, if $t = 0$ (this is certainly the case if $d \geq 6g -5$,
cf. Proposition \ref{prop:cases}),
then $\mathcal E_{h,m} \cong \mathcal U_{h,m}$.
On the other hand, if $t>0$, we take into account
the {\em weak isomorphism classes} of extensions (cf. \cite[p. 31]{Fr}). Therefore, we consider
$$\mathcal S_{h,m} := \left\{\begin{array}{cl}
                       \mathcal U_{h,m} & \mbox{if} \; t=0 \\
                       \Pp({\mathcal E}_{h,m}) & \mbox{otherwise}.
                       \end{array}
                       \right.$$

On $\mathcal U_{h,m}$ there is a universal family $\mathcal C_{h,m}$ of curves and on $ \mathcal S_{h,m} \times_{\mathcal U_{h,m}} \mathcal C_{h,m}$
there is a universal vector-bundle $\mathcal F_{h,m}$. A general point $z \in \mathcal S_{h,m}$
corresponds to a pair $(L,N) \in W^h_m(C) \times Pic^{d-m}(C)$, with $[(C,x)] \in {\mathcal M}^0_{g,1}$ general, together with an element $\xi \in \Pp({\rm Ext}^1(L,N))$ if $t>0$; the fibre of $\mathcal F_{h,m}$ on $z$
is the extension $\Ff_z$ of $L$ with $N$ on $C$ corresponding to $\xi$. Given the projection
$\pi_1$ of $ \mathcal S_{h,m} \times_{\mathcal U_{h,m}} \mathcal C_{h,m}$
to the first factor, the sheaf $(\pi_1)_*(\mathcal F_{h,m})$ is free of rank $R+1 =
d - 2g + 2 + h^1 $ on a suitable dense, open subset of $ \mathcal S_{h,m}$. We will  abuse notation
and denote this open subset with $ \mathcal S_{h,m}$. Therefore, on $ \mathcal S_{h,m}$, we have
functions $s_0, \ldots, s_R$ such that, for each point $z \in \mathcal S_{h,m}$, $s_0, \ldots, s_R$
computed at $z$ span the space of sections of the correspoding vector bundle $\Ff_z$.

There is a natural map $\psi_{h,m}: \mathcal S_{h,m} \times {\rm PGL}(R+1, \C) \to
{\rm Hilb}(d,g,h^1)$:
given a pair $(z,\omega)$, embed $\Pp(\Ff_z)$ to $\Pp^R$ via the sections $s_0, \ldots, s_R$
computed at $z$, compose with $\omega$ and take the image.

We define $\HH_{d,g,h^1}^m$ to be the closure of the image of the above map to the
Hilbert scheme. By construction,
$\HH_{d,g,h^1}^m$ dominates ${\mathcal M}_g$ and its general point represents a smooth, linearly
normal scroll $S$ in $\Pp^{R}$ of degree $d$, genus $g$, speciality $h^1$ and containing a unique section
of degree $m$, speciality $h^1$, which is linearly normally embedded in $S$. The general point
of $\HH_{d,g,h^1}^m$ corresponds to an indecomposable scroll if $t>0$; however, in this case,
decomposable scrolls fill up a proper subscheme of $\HH_{d,g,h^1}^m$.

In the next steps, we will show that:
\begin{itemize}
\item  $\HH_{d,g,1}^{2g-2}$ strictly
contains any  $\HH_{d,g,1}^m$, for $m < 2g-2$, and it is an irreducible
component of ${\rm Hilb}(d,g,1)$.
\item for $h^1 \geq 2$, $\HH_{d,g,h^1}^m$ is an irreducible component of ${\rm Hilb}(d,g,h^1)$,
for any $ g + 3 - h^1 \leq m \leq \overline{m}$,
\end{itemize}

\vskip 5pt

\noindent {\bf Step 2. A Lemma concerning automorphisms.}
Here we prove the following:

\begin{lemma}\label{lem:step2}
Assume ${\rm Aut}(C) = \{Id\}$ (in particular, this happens if
$C$ has general moduli). Let \eqref{eq:Fund}
be the exact sequence corresponding to the pair $(S,\Gamma)$, where $S$ is general and
$\Gamma$ is the unique special section of $S$.

If $G_S \subset {\rm PGL} (R +1, \C)$ denotes the sub-group of projectivities of $\Pp^{R}$
fixing $S$,  then $G_S \cong {\rm Aut}(S)$ and
\begin{equation}\label{eq:dimGS}
\dim(G_S)  = \left\{\begin{array}{cl}
                             h^0(N\otimes L^{\vee}) & \mbox{if $\Ff$ is indecomposable} \\
                             h^0(N\otimes L^{\vee}) +1 & \mbox{if $\Ff$ is decomposable} \\
                                                                 \end{array}
                                                                 \right.
\end{equation}
\end{lemma}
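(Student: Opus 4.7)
My plan is to split the proof into two movements: identifying $G_S$ with ${\rm Aut}(S)$, and then computing $\dim G_S$ via endomorphisms of $\Ff$. For the identification, injectivity of $G_S \to {\rm Aut}(S)$ is immediate from $S$ being non-degenerate in $\Pp^R$, so I would focus on surjectivity: given $\phi \in {\rm Aut}(S)$, I want to show $\phi$ extends to a projectivity of $\Pp^R$. By Theorem \ref{thm:seg1e2}, $\Gamma$ is the unique section of non-positive self-intersection, so $\phi(\Gamma) = \Gamma$. The rulings of $S$ are intrinsically determined (e.g.\ as the fibres of the natural map $S \to \Gamma \cong C$), hence $\phi$ preserves them and induces an automorphism of $C$, which must be ${\rm Id}_C$ by the hypothesis ${\rm Aut}(C) = \{Id\}$. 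From $\phi^*f = f$ together with $Hf=1$ I get $\phi^*H = H + \rho^*M$ for some $M \in {\rm Pic}(C)$; on the other hand, since $\phi$ covers ${\rm Id}_C$ it acts trivially on $\rho^*{\rm Pic}(C)$, so $\phi^*[\Gamma] = [\Gamma] + \rho^*M$ in ${\rm Pic}(F)$. Comparing with $\phi^*[\Gamma] = [\Gamma]$ (forced by $\phi(\Gamma) = \Gamma$ as a subvariety) and using the injectivity of $\rho^*$, one concludes $M = 0$. Therefore $\phi^*\Oc_S(1) \cong \Oc_S(1)$, and $\phi$ acts linearly on $H^0(S, \Oc_S(1))$, extending to an element of $G_S$.

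Pushing $\phi^*\Oc_F(1) \cong \Oc_F(1)$ forward via $\rho$ gives $\tilde\phi \in {\rm Aut}(\Ff)$, unique up to a scalar in $\C^*$, and conversely any such $\tilde\phi$ induces $\phi \in G_S$; this identifies $G_S \cong {\rm Aut}(\Ff)/\C^*$. Since ${\rm Aut}(\Ff)$ is Zariski-open in ${\rm End}(\Ff)$, it suffices to compute $\dim{\rm End}(\Ff) - 1$. The decisive input from Theorem \ref{thm:seg1e2} is $\Gamma^2 = 2m - d < 0$, which forces $\deg(L \otimes N^{\vee}) < 0$ and hence ${\rm Hom}(N,L) = 0$. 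Applying ${\rm Hom}(N,-)$ to \eqref{eq:Fund} gives ${\rm Hom}(N,\Ff) \cong {\rm Hom}(N,N) \cong \C$; applying ${\rm Hom}(-,\Ff)$ to \eqref{eq:Fund} yields
$$0 \to {\rm Hom}(L,\Ff) \to {\rm End}(\Ff) \to {\rm Hom}(N,\Ff) \cong \C \to 0,$$
where surjectivity onto $\C$ holds because ${\rm id}_\Ff$ restricts to the spanning inclusion $N \hookrightarrow \Ff$. Finally, applying ${\rm Hom}(L,-)$ to \eqref{eq:Fund} gives
$$0 \to H^0(N\otimes L^{\vee}) \to {\rm Hom}(L,\Ff) \to \C \stackrel{\delta}{\to} {\rm Ext}^1(L,N),$$
with $\delta({\rm id}_L) = [\Ff]$. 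Since $[\Ff]=0$ exactly when $\Ff$ is decomposable, one gets $\dim{\rm Hom}(L,\Ff) = h^0(N\otimes L^{\vee}) + \epsilon$ with $\epsilon = 0$ in the indecomposable case and $\epsilon = 1$ in the decomposable one. Combining, $\dim G_S = \dim{\rm End}(\Ff) - 1$ gives exactly \eqref{eq:dimGS}.

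The main obstacle is the first movement: ensuring $\phi^*\Oc_S(1) \cong \Oc_S(1)$ in ${\rm Pic}(S)$, rather than only in ${\rm Num}(S)$, which is precisely where the uniqueness of the special section $\Gamma$ from Theorem \ref{thm:seg1e2} is essential. Once this is in hand, the endomorphism computation is a clean application of ${\rm Hom}$-sequences exploiting the strict inequality $\deg N > \deg L$ that $\Gamma^2 < 0$ provides.
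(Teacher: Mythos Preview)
Your proof is correct. The first movement---showing $G_S \cong {\rm Aut}(S)$ by proving $\phi^*H \sim H$ via the uniqueness of $\Gamma$ and ${\rm Aut}(C)=\{Id\}$---is exactly the paper's argument, only spelled out in slightly more detail; the paper writes it as $\sigma^*(H) = \sigma^*(\Gamma) + \sigma^*(\rho^*N) = \Gamma + \rho^*N \sim H$ directly.

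For the dimension formula the paper's proof simply cites Maruyama \cite{Ma2}, whereas you supply a self-contained computation via ${\rm End}(\Ff)$ and the ${\rm Hom}$-sequences attached to \eqref{eq:Fund}. Your approach is essentially equivalent to the alternative argument the paper records in Remark~\ref{rem:maruyama}: there one computes $\dim G_S = h^0(\T_S) = h^0(\Ff\otimes L^{\vee})$ and then analyzes the map $H^0(\Ff\otimes L^{\vee}) \to H^0(\Oc_C)$, which is exactly your sequence $0 \to H^0(N\otimes L^{\vee}) \to {\rm Hom}(L,\Ff) \to \C \stackrel{\delta}{\to} {\rm Ext}^1(L,N)$. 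Your framing via ${\rm Aut}(\Ff)/\C^*$ and $\dim{\rm End}(\Ff)-1$ adds one extra short exact sequence but lands in the same place. One small point worth making explicit: your identification ``$[\Ff]=0$ exactly when $\Ff$ is decomposable'' uses that any splitting of $\Ff$ must be as $N\oplus L$, which follows because $\Gamma$ is the unique section of minimal degree (Theorem~\ref{thm:seg1e2}); this is implicit in your setup but deserves a word.
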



\begin{proof}[Proof of Lemma \ref{lem:step2}] 
There is an obvious inclusion $G_S \hookrightarrow
{\rm Aut}(S)$. We want to show that this is an isomorphism. Let $\sigma $ be an automorphism of $S$.
By Theorem \ref{thm:seg1e2}, $\sigma (\Gamma) = \Gamma$ and since
${\rm Aut}(C) = \{Id\}$, $\sigma$ fixes $\Gamma$ pointwise.
Now $H \sim \Gamma + \rho^*(N)$ and by the above,
$\sigma^*(H) = \sigma^*(\Gamma) + \sigma^*(\rho^*(N)) = \Gamma + \rho^*(N) \sim H$. Therefore,
$\sigma$ is induced by a projective trasformation.

The rest of the claim directly follows from cases $(2)$ and $(3)$ of \cite[Theorem 2]{Ma2} and
from \cite[Lemma 6]{Ma2}. Indeed, since ${\rm Aut}(C) = \{Id\}$ therefore, in the
notation of \cite[Lemma 6]{Ma2} one has ${\rm Aut}(S) \cong {\rm Aut}_C(S)$. Furthermore,
from Theorem \ref{thm:seg1e2}, $\Gamma$ is the unique section of minimal degree on $S$. Thus, one can conclude
by using the description of ${\rm Aut}_C(S)$ in \cite[Theorem 2]{Ma2}.
\end{proof}


\noindent {\bf Step 3. The dimension of $\HH^m_{d,g,h^1}$.} Given a
general point of $\HH^m_{d,g,h^1}$ corresponding to a scroll $S$, the base of the scroll $C$
and the line bundles $L$ and $N$ on $C$ are uniquely determined. From the previous steps,
$\dim{\psi_{h,m}}^{-1}([S]) = \dim (G_S)$. An easy computation shows that, in any case,  one has
\begin{equation}\label{eq:nubig}
\dim (\HH_{d,g,h^1}^m) = 5g-5 + (R+1)^2 - h^0(L)h^1(L) - \chi(N \otimes L^{\vee}).
\end{equation}

\vskip 5pt

\noindent {\bf Step 4. The case $h^1=1$.} Let $S$ correspond to a general
point in $\HH^m_{d,g,1}$, i.e. $S$ is determined by a pair $(\Ff,C)$, with $\Ff$ fitting in an exact sequence
like \eqref{eq:Fund}, with $|L|$ a linear series $\g^h_m$ of speciality $1$. Suppose that
$L \neq \omega_C$. In particular, $g \geq 4$.  The residual series is
$ |\omega_C \otimes L^{\vee}|$ is a $\g^0_{2g-2-m}$, i.e.
$\omega_C \otimes L^{\vee} \cong \Oc_C (p_1 + \cdots + p_{2g-2-m})$, where $p_j$ are general
points on $C$, $1 \leq j \leq 2g-2-m$, hence
$L \cong \omega_C(- p_1 - \cdots - p_{2g-2-m})$. From Corollary
\ref{cor:ciro53b} we have that $\HH^m_{d,g,1}$, with $m < 2g-2$, sits in the closure of
$\HH^{2g-2}_{d,g,h^1}$. The dimension count for $\HH^m_{d,g,1}$'s follows
by \eqref{eq:nubig}. The generic smoothness of $\HH^{2g-2}_{d,g,1}$
follows by comparing the formula
for $h^0(\N_{S/\Pp^R})$ from Proposition \ref{prop:tghilbh1} (equivalently \eqref{eq:contoesplicito})
with \eqref{eq:nubig}.

The dimension count  for $\HH^m_{d,g,1}$ follows by the construction.

\vskip 5pt

\noindent {\bf Step 5. The case $h^1 \geq 2$.} In Step 1 we constructed the irreducible
subschemes $\HH_{d,g,h^1}^m$ of the Hilbert scheme ${\rm Hilb}(d,g,h^1)$ for  $g+3-h^1 \leq m \leq \overline{m}$.
From the dimension count \eqref{eq:nubig} and from the formula for $h^0(\N_{S/\Pp^R})$ in Proposition \ref{prop:tghilbh1} (equivalently \eqref{eq:contoesplicito})
one has that each $\HH_{d,g,h^1}^m$ is a generically smooth, irreducible component of the Hilbert scheme
${\rm Hilb}(d,g,h^1)$.

\vskip 5pt

We have finished the proof of Theorem \ref{thm:hilschh1}.
\end{proof}

\begin{remark}\label{rem:maruyama} Observe that \eqref{eq:dimGS} can be also
computed via cohomological arguments.
Indeed, $\dim({\rm Aut}(S)) = h^0(S,\T_S)$. From \eqref{eq:tgrel}, one has $H^0(\T_S) \cong H^0(\T_{rel})$.
Since $\T_{rel} \cong \omega^{\vee}_S \otimes \rho^*(\omega_C)$, we get $\T_{rel} \cong \Oc_S (2H - \rho^*({\rm det}(\Ff)))$.
As $\Gamma \cdot (2H-\rho^*({\rm det}(\Ff))) = 2m-d  = \Gamma^2 <0$,
$\Gamma$ is a fixed component
of $|2H- \rho^*({\rm det}(\Ff))|$.
Since $H \sim \Gamma + \rho^*(N)$, then $h^0(\Oc_S(2H-\rho^*({\rm det}(\Ff))))  = h^0(\Oc_S(H- \rho^*(L)))$. Moreover,
$\Gamma \cdot (H-\rho^*(L)) =0$.
By the projection formula, we get $H^0(S, H- \rho^*(L)) \cong H^0(C, \Ff \otimes L^{\vee})$. Therefore,
$h^0(S,\T_S) = h^0(C, \Ff \otimes L^{\vee})$.

The map $H^0(\Ff \otimes L^{\vee}) \to  H^0(\Oc_C)$ arising from
the exact sequence \eqref{eq:Fund} identifies with the restriction map
$H^0(\Oc_S( H-\rho^*(L)) ) \to H^0(\Oc_{\Gamma})$.

\noindent
$\bullet$ If $\Ff$ is decomposable, this map is surjective. Therefore,
$h^0(\Ff\otimes L^{\vee}) = 1 + h^0( N \otimes L^{\vee})$. In particular, this happens
when ${\rm Ext}^1(L,N) \cong H^1(N \otimes L^\vee) = 0$, e.g. if $d \geq 6g - 5$ (cf. Proposition \ref{prop:cases}).

\noindent
$\bullet$ If $\Ff$ is indecomposable, then the coboundary map $H^0(\Oc_C) \to H^1(N \otimes L^\vee)$, arising
from \eqref{eq:Fund}, is injective since it corresponds to the choice of
$\Ff$ as an element of $ {\rm Ext}^1(L,N) $. In particular, $h^0(\Ff\otimes L^{\vee}) = h^0( N \otimes L^{\vee})$.

The above discussion proves \eqref{eq:dimGS}.
\end{remark}

\begin{remark}\label{rem:GP} As an alternative to the construction of  $\HH_{d,g,h^1}^{m}$ in the above proof of
Theorem \ref{thm:hilschh1},
one could start from the main result in \cite{GP3}. One has that the general scroll in $\HH_{d,g,1}^{2g-2}$
can be obtained as a general internal projection of a scroll $S$
corresponding to a decomposable vector-bundle on a genus $g$
curve $C$ of the type $\omega_C \oplus M$, with $M$ a non-special,
line-bundle of degree greater or equal than $2g-2$.
Such a scroll has a unique, special section which is canonical.

One may prove that  the general scroll in
$\HH_{d,g,h^1}^{m}$ arises as the projection of $S$ from suitable $2g-2-m$ points on the canonical
section and from other general points on $S$.
\end{remark}

\section{Further considerations on Hilbert schemes}\label{S:considerations}

\subsection{Components of the Hilbert scheme of non-linearly normal scrolls}

In this section we consider the following problem.
Let $r = d - 2g + 1 + k$, with $0 \leq k < l$ and
consider the family $\mathcal Y^m_{k,l}$ whose general element is a
general projection to $\Pp^r$ of the general scroll in $ \HH_{d,g,l}^m$. Is  $\mathcal Y^m_{k,l}$
contained in $\HH_{d,g,k}^n$ for some $n$, if $k>0$, or in  $\HH_{d,g}$, if $k=0$? Recall that $\HH_{d,g}$ is
the component of the Hilbert scheme of linearly normal,
non-special scrolls of degree $d$ and genus $g$ in $\Pp^{d-2g+1}$ (cf. \cite{CCFMLincei}).

\begin{proposition}\label{prop:ciropasqua} In the above setting:
\begin{itemize}
\item[(i)] if $k>0$, $\mathcal Y^m_{k,l}$ sits
in an irreducible component of the Hilbert scheme different from $\HH_{d,g,k}^n$, for any $n$;
\item[(ii)] if $l>1$, $\mathcal Y^m_{0,l}$ sits in an irreducible
component of the Hilbert scheme different from $\HH_{d,g}$, for any $m$;
\item[(iii)] $\mathcal Y^{2g-2}_{0,1}$ is a divisor inside $\HH_{d,g}$ whose general point is a smooth
point for $\HH_{d,g}$.
\end{itemize}
\end{proposition}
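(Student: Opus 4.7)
The proof naturally splits into a dimension comparison for parts (i)--(ii) and a refined normal-bundle analysis for part (iii).

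For \textbf{parts (i) and (ii)}, the key observation is that for a generic $S'\in\mathcal Y^m_{k,l}$ the isomorphism $\pi|_S:S\to S'$ gives $\mathcal O_{S'}(H')\cong\mathcal O_S(H)$, so $S'$ has speciality $l$ and excess $l-k>0$ in its hyperplane $h^0$. On the other hand, the generic member of $\HH^n_{d,g,k}$ (resp.\ of $\HH_{d,g}$) is linearly normal of speciality $k<l$ (resp.\ non-special). These two generic behaviours are incompatible, so $\mathcal Y^m_{k,l}$ can be contained in the target component only as a proper subscheme --- in particular of strictly smaller dimension. The plan is therefore to parametrize $\mathcal Y^m_{k,l}$ by pairs $(S,\Lambda)$, where $S\in\HH^m_{d,g,l}$ and $\Lambda\subset\Pp^R$ is an $(l-k-1)$-plane disjoint from $S$ (together with an identification of $\Pp^R/\Lambda$ with $\Pp^r$), account for the $\mathrm{PGL}(R+1)$-action, for the fiber of the projection map (analogous to the one computed in step 3 below), and for automorphisms via Lemma~\ref{lem:step2}. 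Comparing the resulting explicit formula with the formula for $\dim\HH^n_{d,g,k}$ from Theorem~\ref{thm:hilschh1}, a direct check yields the strict inequality $\dim\mathcal Y^m_{k,l}>\dim\HH^n_{d,g,k}$ (resp.\ $>\dim\HH_{d,g}$) for every admissible $n$, giving the required contradiction.

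For \textbf{part (iii)}, the parameters are $l=1$, $k=0$, $m=2g-2$ and $S'=\pi_p(S)$ is the projection from a single general point $p\in\Pp^R$ of a general $S\in\HH^{2g-2}_{d,g,1}$. The proof requires three steps. \emph{Step 1 ($\mathcal Y^{2g-2}_{0,1}\subset\overline{\HH_{d,g}}$).} A general scroll in $\HH^{2g-2}_{d,g,1}$ comes from $0\to N\to\Ff\to\omega_C\to 0$. Deform $\omega_C$ inside $\mathrm{Pic}^{2g-2}(C)$ to a nearby non-special line bundle $L_t$ (with $L_0=\omega_C$, $h^1(L_t)=0$ for $t\neq 0$), and lift to a family $\Ff_t$. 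For $t\neq 0$ one has $h^0(\Ff_t)=R$, so $\Pp(\Ff_t)$ is linearly normal and non-special in $\Pp^{R-1}=\Pp^r$, i.e.\ a point of $\HH_{d,g}$. The flat limit as $t\to 0$ coincides with an isomorphic projection of $\Pp(\Ff_0)$, placing a general element of $\mathcal Y^{2g-2}_{0,1}$ in $\overline{\HH_{d,g}}$. \emph{Step 2 (normal bundle).} Comparing Euler sequences on $\Pp^R$ and $\Pp^r$ restricted to the isomorphic copies $S\cong S'$, the kernel of $d\pi:T_{\Pp^R}|_S\twoheadrightarrow T_{\Pp^r}|_{S'}$ is a line bundle of Chern class $H$, hence $\cong\mathcal O_S(H)$, and the snake lemma yields
$$0\to\mathcal O_S(H)\to\N_{S/\Pp^R}\to\N_{S'/\Pp^r}\to 0.$$
Since $h^2(\mathcal O_S(H))=0$ (scroll) and $h^1(\N_{S/\Pp^R})=0$ by Proposition~\ref{prop:tghilbh1} with $h^1=1,\,m=2g-2$, the long exact sequence gives $h^1(\N_{S'/\Pp^r})=0$ and
$$h^0(\N_{S'/\Pp^r})=\chi(\N_{S/\Pp^R})-\chi(\mathcal O_S(H))=\bigl(7(g-1)+R(R+1)\bigr)-R=7(g-1)+R^2=\dim\HH_{d,g}.$$

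\emph{Step 3 (conclusion and codimension).} By step 2, $[S']$ is a smooth point of $\mathrm{Hilb}(\Pp^r)$ on a unique component of dimension $\dim\HH_{d,g}$; step 1 identifies this component with $\HH_{d,g}$. For the codimension, the map $\HH^{2g-2}_{d,g,1}\times\Pp^R\dashrightarrow\mathrm{Hilb}(\Pp^r)$ has generic fiber of dimension $2R+1$: the center $p\in\Pp^R$ is free, and for fixed $(S',p)$ the lifts $S$ are sections of the cone $\pi_p^{-1}(S')\to S'$, parametrized by $H^0(S',\mathcal O_{S'}(H))$ of dimension $R+1$. Therefore
$$\dim\mathcal Y^{2g-2}_{0,1}=\dim\HH^{2g-2}_{d,g,1}+R-(2R+1)=7(g-1)+R^2-1=\dim\HH_{d,g}-1,$$
so $\mathcal Y^{2g-2}_{0,1}$ is a divisor in $\HH_{d,g}$. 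The main technical obstacle is the flatness/limit identification in step 1: verifying that the flat limit of the family $\Pp(\Ff_t)\subset\Pp^r$ as $t\to 0$ is genuinely an isomorphic projection of $\Pp(\Ff_0)$ rather than a more degenerate scroll; the remaining dimension bookkeeping is routine.
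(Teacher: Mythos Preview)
Your approach to parts (i)--(ii) and to Step~2 of part (iii) coincides with the paper's: the dimension comparison and the Rohn-type exact sequence
\[
0 \to \Oc_S(H) \to \N_{S/\Pp^{R}} \to \N_{S'/\Pp^{r}} \to 0
\]
are exactly what the paper uses, yielding $h^1(\N_{S'/\Pp^r})=0$ and $h^0(\N_{S'/\Pp^r})=7(g-1)+(r+1)^2$.

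The genuine divergence is in how you identify the component through $[S']$ with $\HH_{d,g}$. You propose an explicit degeneration (Step~1): deform $\omega_C$ to a non-special $L_t$ and take the flat limit. The paper avoids this entirely. Staying inside the Rohn sequence, it observes that the map $H^0(\N_{S/\Pp^{R}})\to H^0(\N_{S'/\Pp^{r}})$ has cokernel $H^1(\Oc_S(H))$, which is one-dimensional; hence there is a first-order deformation of $S'$ in $\Pp^r$ that does \emph{not} arise from deforming $(S,p)$ and projecting. Since $[S']$ is a smooth point of its component $\HH$, this deformation is unobstructed, so the general member of $\HH$ is linearly normal in $\Pp^r$---therefore non-special---and one invokes the uniqueness of $\HH_{d,g}$ from \cite{CCFMLincei}. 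This is slicker: the very exact sequence you already wrote down contains the identification of the component, and the ``technical obstacle'' you flag (controlling the flat limit of the $\Pp(\Ff_t)$-family when $h^0$ jumps) simply disappears.

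Your Step~3 fiber count is a legitimate alternative to the paper's route (which combines the lower bound $\dim\mathcal Y^{2g-2}_{0,1}\ge\dim\HH_{d,g}-1$ from the parameter count with the proper containment forced by the cokernel argument). Both reach the same codimension-one conclusion; yours is more hands-on, the paper's falls out for free once the cokernel is computed.
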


\begin{proof} (i) It suffices to prove that
\begin{equation}\label{eq:ciropasqua}
\dim(\mathcal Y^m_{k,l}) \geq \dim (\HH_{d,g,k}^m).
\end{equation}Indeed, if
$\mathcal Y^m_{k,l}$ is contained in a component $\HH_{d,g,k}^n$, then $n \geq m$ and
the conclusion follows from Remark \ref{rem:hilschh1}.

In order to prove \eqref{eq:ciropasqua}, we count the number
of parameters on which $\mathcal Y^m_{k,l}$ depends.
Let $[S] \in \HH_{d,g,l}^m$ be general. Then, $S \subset \Pp^R$, where $R=
d-2g+1+l$ and $S \cong \Pp(\Ff)$ with
$\Ff$ a vector bundle on a curve $C$ sitting in an exact sequence like \eqref{eq:Fund}.
Let $S' \subset \Pp^r$ be the general projection of $S$, with $r$ as above.
Let $G_{S'} \subset {\rm PGL}(r+1, \C)$ be the subgroup of projectivities which fix $S'$.

The parameters on which $\mathcal Y^m_{k,l}$ depends are the following:
\begin{itemize}
\item $3g -3$, for the parameters on which $C$ depends, plus
\item $g$, for the parameters on which $N$ depends, plus
\item $\rho(g,L) = g - l (m-g+l+1)$, for the parameters on which $L$ depends, plus
\item $\epsilon = 0$ (respectively, $h^1(N\otimes L^{\vee}) - 1$) if the general bundle is
decomposable (respectively, indecomposable), plus
\item $(r+1) (l-k) = dim (\G (r, R)) $, which are the parameters for the projections, plus
\item $(r +1)^2 -1 = dim ({\rm PGL}(r+1, \C))$, minus
\item $\dim(G_{S'})$.
\end{itemize} We remark that, since $G_S \cong {\rm Aut}(S)$ (cf. Lemma \ref{lem:step2}), one has $\dim(G_{S'}) \leq \dim(G_S)$.

Therefore, by recalling \eqref{eq:dimGS}, in any case we get
\begin{equation}\label{eq:dimY}
\dim(\mathcal Y^m_{k,l}) \geq 5g-5 + (r+1)^2 - \chi(N\otimes L^{\vee}) - l(m-g+l+1) + (l-k) (r+1).
\end{equation}

By Theorem \ref{thm:hilschh1} and \eqref{eq:contoesplicito},
$$\dim(\HH_{d,g,k}^m) = 5g-5  + (r+1)^2 - k (m-g+k+1) - \chi(N \otimes L^{\vee}).$$Then
$$\dim({\mathcal Y}_k) - \dim(\HH_{d,g,k}^m) \geq (l-k) (d-m - g + 1 - k -l).$$From the assumptions on  $d$ and
the facts that  $L$ is special and $k <l \leq \frac{g}{4}$, then $\dim(\mathcal Y^m_{k,l}) > \dim(\HH_{d,g,k}^m)$, i.e.
we proved \eqref{eq:ciropasqua}.

\noindent
(ii) In this case we have to prove
$$\dim(\mathcal Y^m_{0,l}) \geq \dim (\HH_{d,g}) = (r+1)^2 + 7(g-1).$$Arguing as above, we see that
this is a consequence of
$$l (d-g-l+1 -m) \geq g-1+d-2m,$$which holds since
$l \geq 2$ and by the assumptions on $d$.

\noindent
(iii) The same computation as above shows that
$$\dim (\mathcal Y^{2g-2}_{0,1}) \geq \dim (\HH_{d,g}) -1.$$We want to prove that equality holds and that
$\mathcal Y^{2g-2}_{0,1} \subset \HH_{d,g}$. Consider the {\em Rohn exact sequence}
$$0 \to \Oc_S(H) \to \N_{S/\Pp^{r+1}} \to \N_{S'/\Pp^{r}} \to 0$$(see, e.g. \cite{Cil}, p. 358, formula (2.2)).
From Proposition \ref{prop:tghilbh1} (ii), we have $h^1(\N_{S/\Pp^{r+1}}) =0$, therefore
also $h^1(\N_{S'/\Pp^{r}}) = 0$. Hence $\mathcal Y^{2g-2}_{0,1}$ is contained in a component $\HH$
of the Hilbert scheme of dimension $\chi(\N_{S'/\Pp^{r}} ) = 7(g-1) + (r+1)^2$ and
the general point of $\mathcal Y^{2g-2}_{0,1}$ is a smooth point of $\HH$. The map
$H^0(\N_{S/\Pp^{r+1}}) \to H^0(\N_{S'/\Pp^{r}}) $ is not surjective:
its cokernel is $H^1(\Oc_S(H)) $, which has dimension one (cf. Proposition \ref{prop:tghilbh1} (ii)). This shows
that the general point of $\HH$ correspond to a smooth scroll which is linearly normal in $\Pp^r$.
By the results in \cite{CCFMLincei}, $\HH = \HH_{d,g}$.
\end{proof}

\begin{remark}\label{rem:Lincei} The previous result extends and makes more precise the contents of
\cite[Example 5.12]{CCFMLincei}.
\end{remark}

\subsection{Components of the Hilbert scheme with special moduli}

There are irreducible components of ${\rm Hilb}(d,g,h^1)$
which do not dominate $\M_g$, i.e. components with {\em special moduli}. We prove the existence of some
of these components in the next example.
To do this we first recall some preliminary results we will use.

Given any integer $g \geq 3$, let
\begin{equation}\label{eq:gon}
\gamma := \left\{ \begin{array}{cl}
             \frac{g+2}{2} & {\rm if} \;\; g \;\; {\rm even},\\
 \frac{g+3}{2} & {\rm if} \;\; g \;\; {\rm odd}.
           \end{array}
  \right.
\end{equation}Set  \[\M^1_{g,t}:= \{[C] \in \M_g
| \; C \; \mbox{possesses a  } \;
\mathfrak{g}^1_t \},
\]which is called the $t$-{\em gonal locus} in $\M_g$.

It is well-known that $\M^1_{g,t}$ is irreducible, of
dimension $2g + 2 t -5$, when $ t < \gamma$, whereas
$\M^1_{g,t}= \M_g$, for $t \geq \gamma$
(see e.g. \cite {AC2}). Moreover, the general curve in $\M^1_{g,t}$ has no non-trivial automorphism
(cf. the computations as in \cite[p. 276]{GH}). Finally, for $t<\gamma$,
the general curve in $\M^1_{g,t}$ has a unique base-point-free $\mathfrak{g}^1_t $ (\cite[Theorem 2.6]{AC2}).

\begin{proposition}\label{prop:ballico} (cf. \cite[Prop. 1]{Ballico}) Fix positive integers $g$, $t$, $r$ and $a$, with
$a \geq 3$, $(a-2)(t-1) < g \leq (a-1) (t-1)$. Let $|D|$ be the b.p.f. linear series
$\g^1_t$ on a general $t$-gonal curve $C$ of genus $g$. Then $$\dim (|rD|) = r, \;\; {\rm if} \; r \leq a-2.$$
\end{proposition}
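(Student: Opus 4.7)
The plan is to establish the two opposite bounds $r \le \dim|rD| \le r$. The lower bound is automatic: if $s_0,s_1 \in H^0(C,D)$ are a basis of the two-dimensional space of sections of the b.p.f.\ pencil $|D|$, then the $r+1$ monomials $s_0^{r-i}s_1^{i}$, $0 \le i \le r$, are linearly independent in $H^0(C,rD)$, whence $\dim|rD| \ge r$.

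For the upper bound I would pass to $\Pp^1$ via the degree-$t$ morphism $\pi : C \to \Pp^1$ defined by $|D|$. Since $rD \sim \pi^*\Oc_{\Pp^1}(r)$, the projection formula combined with Grothendieck's splitting of vector bundles on $\Pp^1$ yields
\[
\pi_*\Oc_C \;\cong\; \Oc_{\Pp^1} \oplus \bigoplus_{i=1}^{t-1}\Oc_{\Pp^1}(-a_i),\qquad a_i \ge 1,
\]
where the trivial summand accounts for $H^0(\Oc_C)$ and $\sum_i a_i = g+t-1$ follows by computing $\chi$ of both sides. Consequently
\[
h^0(C,rD) \;=\; (r+1) \;+\; \sum_{i=1}^{t-1}\max\{0,\,r-a_i+1\},
\]
and the problem reduces to showing that $a_i \ge r+1$ for every $i$, whenever $C$ is general in the $t$-gonal locus.

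The key input is the \emph{balancedness} of the scrollar invariants $a_i$ for a generic $t$-gonal curve: they take only the values $\lfloor (g+t-1)/(t-1)\rfloor$ and $\lceil (g+t-1)/(t-1)\rceil$. Granted this, the hypothesis $(a-2)(t-1)<g$ rearranges to $(g+t-1)/(t-1)>a-1$, so $\min_i a_i \ge a-1 \ge r+1$ whenever $r \le a-2$; the extra summands in the formula above then vanish and $h^0(C,rD)=r+1$, as desired.

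The main obstacle is therefore justifying the balancedness statement. My plan would be to exhibit a single $t$-gonal curve $C_0$ of genus $g$ whose associated cover $\pi_0 : C_0 \to \Pp^1$ has splitting type as balanced as possible---for $t=3$ this is Maroni's classical theorem, and for arbitrary $t$ it can be realized either by a simply branched cover with sufficiently general branch locus or by a chain-type specialization on which the scrollar invariants are computable by hand---and then to invoke upper semicontinuity of $h^0(rD)$ in the Hurwitz family (which dominates $\M^1_{g,t}$, irreducible by the discussion preceding the statement) to transfer the vanishing of the extra summands to the generic $t$-gonal curve. Once this black box is in place, the rest of the argument is a formal calculation on $\Pp^1$.
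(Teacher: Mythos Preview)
The paper supplies no proof of this proposition: it is simply quoted from \cite[Prop.~1]{Ballico}, so there is nothing in the paper to compare your argument against.

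Your approach via the splitting type of $\pi_*\Oc_C$ is a correct and well-known route to this statement. The computation $\sum_i a_i = g+t-1$ is right, and once balancedness is granted the hypothesis $(a-2)(t-1)<g$ indeed forces $\min_i a_i \ge a-1$, so the extra summands in your formula for $h^0(rD)$ vanish when $r\le a-2$. The one substantive input you defer---that a general $t$-gonal curve has balanced scrollar invariants---is a known result (Maroni for $t=3$; in general one may exhibit a smooth curve on a balanced rational normal scroll of degree $t-1$ in $\Pp^t$, or argue by semicontinuity in the Hurwitz scheme as you indicate). Your outline for obtaining it is sound.

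One caveat worth noting: the inequality $\min_i a_i \ge a-1$ is \emph{equivalent} to the conclusion $h^0((a-2)D)=a-1$ you are trying to prove, so invoking ``balancedness'' as a black box is really a reformulation rather than a reduction. The genuine content lies in the construction of a single cover with the required splitting type, which you do sketch. Ballico's original argument presumably proceeds more directly via linear-series considerations on the curve, but your scrollar-invariant packaging is the standard modern viewpoint and is perfectly valid.
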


With  assumptions as in Proposition \ref{prop:ballico}, if we consider
\begin{equation}\label{eq:lr}
L_r := \Oc_C(K_C - r D)
\end{equation}then $L_r$ is a special line bundle, of speciality $r+1$; in particular,
$h^0(L_r) = g - r(t-1)$.

\begin{example}\label{ex:specmod}
Let $a$, $g$ and $t$ be positive integers as in Proposition \ref{prop:ballico}.
Let $[C] \in \M^1_{g,t}$ be general, with
$2 < t < \gamma$. Denote by $D$ the divisor on $C$ such that $|D|$ is the $\mathfrak{g}^1_t$ on $C$.
Let $ 2 \leq l \leq a-1$ be any positive integer. As in \eqref{eq:lr},
let $$L:= L_{l-1} = \omega_C \otimes \Oc_C (-(l-1)D).$$Then
$$m:= {\rm deg}(L) = 2g-2-(l-1)\;t \;\; {\rm and} \;\; h^1(L) = l.$$If we further assume that
\begin{equation}\label{eq:KK}
l \leq \frac{2g}{t(t-1)} - \frac{1}{t} -1,
\end{equation}then $L$ is also very ample (cf. \cite[Theorem B]{KK}).

Let $d \geq 6g-5$ be an integer.
Let $N \in Pic^{d-m}(C)$ be general. Then ${\rm deg}(N) = d+ (l-1) t -2g+2 \geq 4g-3+ (l-1) t$.
Therefore, $N$ is very ample and non-special.

Let $\Ff= N \oplus L$: the choice of $\Ff$ decomposable is not restrictive;
indeed, since $d \geq 6g-5$ and $L$ is special, from Proposition \ref{prop:cases} any scroll in ${\rm Hilb}(d,g,l)$
is associated with a splitting vector bundle. From Theorem \ref{thm:seg1e2}, the pair $(\Ff,C)$
determines a smooth, linearly normal scroll $S \subset \Pp^{R}$, $R = d-2g+1 + l$,
of degree $d$, genus $g$, speciality $l$,
with special moduli and containing a unique special section $\Gamma$, which corresponds to $L$ (cf. Theorem
\ref{thm:seg1e2} (i)).

Scrolls arising from this constructions fill-up  closed subschemes ${\mathcal Z}_{t,l}$
of ${\rm Hilb}(d,g,l)$, which depend on the following parameters:
\begin{itemize}
\item $2g +2t-5$, since $C$ varies in $\M^1_{g,t}$, plus
\item $g$, which are the parameters on which $N$ depends, plus
\item $(R+1)^2 -1 = dim ({\rm PGL}(R+1, \C)),$ minus
\item $\dim(G_S)$, which is the dimension of the projectivities of $\Pp^{R}$ fixing a
general $S$ arising from this construction.
\end{itemize}Since $\Ff$ is decomposable, from \eqref{eq:dimGS} it follows that
\begin{equation}\label{eq:dimZ}
\dim({\mathcal Z}_{t,l}) = (R+1)^2 + 8(g-1) - 4 - d - 2t(l-2).
\end{equation}

On the other hand, if we assume $g \geq 4l$, it makes sense to consider also
the irreducible components  $\HH_{d,g,l}^m$ of ${\rm Hilb}(d,g,l)$ with general moduli,
which have been constructed in Theorem \ref{thm:hilschh1}.

We claim that ${\mathcal Z}_{t,l}$ is not contained in any component of the type $\HH_{d,g,l}^m$, for any
$m$.  From Remark \ref{rem:hilschh1} it suffices to show that $\dim ({\mathcal Z}_{t,l}) \geq
\dim(\HH_{d,g,l}^m)$, with $m = 2g-2 - (l-1)t$. In this case, by Theorem \ref{thm:hilschh1} - (ii), we get
\begin{equation}\label{eq:dimHml}
\dim (\HH_{d,g,l}^m) = (10-l) (g-1) - d - l^2 + t(l-1)(l-2) + (R+1)^2.
\end{equation}Thus, by using \eqref{eq:dimZ} and \eqref{eq:dimHml}, we get
\begin{equation}\label{eq:diff}
\dim({\mathcal Z}_{t,l}) - \dim (\HH_{d,g,l}^m) = (l-2) (g+1+l - t(l+1)).
\end{equation}From \eqref{eq:KK}, one has
$$g+1+l - t(l+1) \geq 2+l + g \; \frac{t-3}{t-1} >0$$since $t \geq 3$. This implies that
the difference is non-negative and therefore the assertion.
\end{example}

In the case $h^1=2$, we can be even more precise.

\begin{proposition}\label{prop:h1=2} The irreducible components of ${\rm Hilb}(d,g,2)$ are either $\HH_{d,g,2}^m$ or
${\mathcal Z}_{t,2}$.
\end{proposition}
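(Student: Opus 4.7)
The plan is to classify all irreducible components of ${\rm Hilb}(d,g,2)$ by inspecting the data canonically attached to a general point. Let $\HH$ be any irreducible component of ${\rm Hilb}(d,g,2)$ and pick a general $[S] \in \HH$. In the range $d \geq 6g-5$, Proposition \ref{prop:cases} forces the underlying rank-two bundle to split as $\Ff = L \oplus N$, and Theorem \ref{thm:seg1e2} provides a unique special section $\Gamma \subset S$ corresponding to $L$, with $h^1(C,L)=2$. Set $m:=\deg L$. The crucial feature of speciality $h^1=2$ is that the residual series $|\omega_C \otimes L^\vee|$ is then a pencil; write it as $|M|+E_0$, where $|M|$ is base-point-free of degree $t_0$ and $E_0$ is an effective divisor of degree $s = 2g-2-m-t_0 \geq 0$.

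Consider the moduli map $\HH \dashrightarrow \M_g$, $[S] \mapsto [C]$, with image $Z$. Since every $[C] \in Z$ carries a base-point-free $\g^1_{t_0}$, we have $Z \subseteq \M^1_{g,t_0}$. If $Z=\M_g$, then $\HH$ dominates $\M_g$, and Theorem \ref{thm:hilschh1} identifies $\HH = \HH_{d,g,2}^m$. If instead $Z\subsetneq \M_g$, then necessarily $t_0<\gamma$, and for general $[C]\in\M^1_{g,t_0}$ the unique $\g^1_{t_0}$ is base-point-free, call its divisor $D$; hence $M\sim D$ and $L \cong \omega_C \otimes \Oc_C(-D-E_0)$.

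In this latter case the aim is to show $s=0$, whence $L=\omega_C\otimes\Oc_C(-D)$ is precisely the line bundle used in the construction of ${\mathcal Z}_{t_0,2}$ in Example \ref{ex:specmod}. Then $\HH \subseteq {\mathcal Z}_{t_0,2}$, and irreducibility of ${\mathcal Z}_{t_0,2}$ together with maximality of $\HH$ give $\HH={\mathcal Z}_{t_0,2}$. To rule out $s>0$, set $t:=t_0+s$: a direct parameter count in the spirit of Example \ref{ex:specmod} shows that the locus of such scrolls has dimension $(R+1)^2+8g-12-d-s$, strictly less than the common dimension $(R+1)^2+8g-12-d$ shared by all the already-constructed components $\HH_{d,g,2}^{m'}$ and ${\mathcal Z}_{t',2}$ (formulas \eqref{eq:dimZ} and \eqref{eq:dimHml} coincide for $l=2$). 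Alternatively, by moving the points of $E_0$ onto suitable rulings via Corollary \ref{cor:ciro53b}, such an $S$ can be realized as a specialization of a general scroll in ${\mathcal Z}_{t,2}$, so the $s>0$ locus is contained in $\overline{{\mathcal Z}_{t,2}}$; since a general scroll in ${\mathcal Z}_{t,2}$ has $s=0$, this contradicts the genericity of $s>0$ on $\HH$.

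The main obstacle is precisely this last step, ruling out the case $s>0$ in the special-moduli regime: one must either verify the dimension comparison above with care (making sure the parameter count is tight for all possible values of the stabilizer $G_S$), or carry out the specialization argument embedding the $s>0$ locus into $\overline{{\mathcal Z}_{t,2}}$ via Corollary \ref{cor:ciro53b}. Once this is done, the dichotomy between Case 1 and Case 2 exhausts all possibilities, and Proposition \ref{prop:h1=2} follows.
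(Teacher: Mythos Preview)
Your approach matches the paper's: both rest on the observation that when $h^1=2$ all the $\HH_{d,g,2}^m$ and $\mathcal{Z}_{t,2}$ share the common dimension $(R+1)^2+8g-12-d$ (the factor $(l-2)$ in \eqref{eq:diff} vanishes), whence they exhaust the components once every scroll is seen to lie in one of them. The paper asserts this covering directly from Theorem~\ref{thm:seg1e2} together with the dimension coincidence, while you spell out the case analysis on the moduli image $Z$ and the base locus $E_0$---a more detailed version of the same argument (your one loose end, the implication ``$Z\subsetneq\M_g\Rightarrow t_0<\gamma$'', is not automatic since $Z$ could a priori be a proper sublocus of $\M_g$ even when the gonality constraint is vacuous, but this is disposed of by the same dimension comparison you use for $s>0$).
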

\begin{proof} By Theorem \ref{thm:seg1e2}, any scroll in ${\rm Hilb}(d,g,2)$ either belongs to a component
$\HH_{d,g,2}^m$ or to  ${\mathcal Z}_{t,2}$. Since, from Remark \eqref{rem:hilschh1}  and
from \eqref{eq:diff}, all of them have the same dimension (in particular, independent from $t$) this
proves the assertion.
\end{proof}

\begin{remark}\label{rem:19608} There are examples of components of the Hilbert scheme of linearly normal, smooth,
special scrolls in $\Pp^R$, of degree $d$, genus $g$ and speciality $h^1$, with $g < 4h^1$.
According to Corollary \ref{cor:seg1e2}, they have special moduli. Examples of such components are e.g.
the $\mathcal Z_{3,l}$'s, with $l > 4$ and $g = 3l +4 < 4l$. Note that \eqref{eq:KK} is verified in this case
with equality. From Proposition \ref{prop:ballico}, $a = \lceil \frac{g}{2} \rceil +1$ hence
$l \leq a-1$. Therefore, the construction in Example \ref{ex:specmod} works and produces
the examples in questions.

\end{remark}

\subsection{Singular points of the Hilbert scheme}

Finally we will prove the existence of points of components of the type $\HH_{d,g,h^1}^m$ corresponding
to smooth scrolls with general moduli which are singular points of the Hilbert scheme.

\begin{proposition}\label{prop:16apr} Assumptions as in Theorem \ref{thm:hilschh1}.
Let $[S] \in \HH_{d,g,h^1}^m$ be a smooth scroll with the special section $\Gamma$ of degree $m$ corresponding
to $[L] \in W^r_m(C)$ such that $\omega_C \otimes L^{\vee}$ has base points. Then $[S]$ is a singular point of the Hilbert scheme.
\end{proposition}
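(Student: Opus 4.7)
The plan is to derive a contradiction between the Zariski tangent space dimension to ${\rm Hilb}(d,g,h^1)$ at $[S]$ and the dimension of the component $\HH^m_{d,g,h^1}$ in which $[S]$ sits.

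First I would invoke Remark \ref{rem:rinvio}, which is the main computational input already prepared by the authors. Tracing back through the proof of Claim \ref{cl:ciro1}, the surjectivity of
\[
H^0(N) \otimes H^0(\omega_C \otimes L^{\vee}) \longrightarrow H^0(\omega_C \otimes N \otimes L^{\vee})
\]
was the only obstruction to obtaining $h^1(\N_{S/\Pp^R}) = h^1(\N_{\Gamma/\Pp^R}) - h^1(\N_{\Gamma/S})$. If $|\omega_C \otimes L^{\vee}|$ has $t\geq 1$ base points, that surjectivity fails precisely by an amount $t$, and the correction propagates into the normal bundle computation as stated in Remark \ref{rem:rinvio}, giving
\[
h^0(S, \N_{S/\Pp^R}) = 7(g-1) + (R+1)(R+1-h^1) + (d-m-g+1)h^1 - (d-2m+g-1) + t.
\]
This is the dimension of the Zariski tangent space to the Hilbert scheme at $[S]$.

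Next I would appeal to Theorem \ref{thm:hilschh1}: the component $\HH^m_{d,g,h^1}$ has dimension given by exactly the same formula but \emph{without} the summand $+t$. By Theorem \ref{thm:seg1e2}, the special section $\Gamma$ of degree $m$ on $S$ is unique, so $[S]$ indeed lies in $\HH^m_{d,g,h^1}$, and the local dimension of the Hilbert scheme at $[S]$ is at least $\dim \HH^m_{d,g,h^1}$.

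Finally, the conclusion is a standard dimension comparison. If $[S]$ were a smooth point of ${\rm Hilb}(d,g,h^1)$, then $[S]$ would lie on a unique irreducible component whose dimension coincides with the tangent space dimension $\dim \HH^m_{d,g,h^1} + t$. But $[S]$ already lies on $\HH^m_{d,g,h^1}$, a component of strictly smaller dimension; hence $[S]$ would sit on two distinct irreducible components of the Hilbert scheme, contradicting smoothness. Therefore $[S]$ is a singular point. There is no genuine obstacle here since the hard cohomological work has already been carried out in Claim \ref{cl:ciro1} and recorded in Remark \ref{rem:rinvio}; the proof amounts to observing that the gap of size $t>0$ between the dimension of $\HH^m_{d,g,h^1}$ and the Zariski tangent space at $[S]$ forces singularity.
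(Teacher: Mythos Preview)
Your proposal is correct and follows exactly the paper's approach: the paper's proof is the single line ``It is an immediate consequence of Theorem \ref{thm:hilschh1} and of Remark \ref{rem:rinvio}'', and you have simply unpacked this by comparing the tangent space dimension from Remark \ref{rem:rinvio} (which carries the extra $+t$) with the component dimension from Theorem \ref{thm:hilschh1}. Your contradiction argument in the last paragraph is a slightly elaborate way of saying that a smooth point lies on a unique component whose dimension equals $h^0(\N_{S/\Pp^R})$, which is incompatible with $[S]\in\HH^m_{d,g,h^1}$ having strictly smaller dimension; this is fine.
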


\begin{proof} It is an immediate consequence of Theorem \ref{thm:hilschh1} and of Remark \ref{rem:rinvio}.
\end{proof}

\begin{remark}\label{rem:16apr} One has examples of such singular points of the Hilbert scheme
as soon as $\rho(g, h+1, m+1) = \rho(g, h^1-1, 2g-3-m) \geq 0$. This is equivalent to
the inequality $g \geq h^1 \frac{m+h^1+2}{h^1+1}$.
\end{remark}

Similarly, the presence of base points for the linear series $|L|$ also produces singular points
of the Hilbert scheme.

\begin{proposition}\label{prop:16aprb} Let $[S] \in \HH_{d,g,h^1}^m$ correspond to
a smooth scroll with general moduli and
with the special section $\Gamma$ of degree $n< m$.
Then $[S]$ is a singular point of the Hilbert scheme.
\end{proposition}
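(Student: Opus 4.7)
The plan is to exhibit $[S]$ as a point lying on two distinct irreducible components of ${\rm Hilb}(d,g,h^1)$, namely $\HH_{d,g,h^1}^m$ (by hypothesis) and a second component $\HH_{d,g,h^1}^n$ associated to the actual special section $\Gamma$ of $S$. Once this is established, $[S]$ must be a singular point of the Hilbert scheme, because through any smooth point of a scheme only a single irreducible component passes. This "two components" strategy is exactly what fails in the $h^1=1$ case (where ${\rm Hilb}(d,g,1)$ is irreducible by Theorem \ref{thm:hilschh1}(i)), so the argument implicitly operates in the range $h^1\ge 2$ where $\HH_{d,g,h^1}^n\ne \HH_{d,g,h^1}^m$ for different indices.

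First we produce the component $\HH_{d,g,h^1}^n$. By Theorem \ref{thm:seg1e2}, the unique special section $\Gamma$ of $S$ has speciality $h^1$; let $L_0\in {\rm Pic}^n(C)$ be the associated line bundle as in \eqref{eq:Fund}. Since $S$ is smooth, $|L_0|$ is a very ample complete $\g^{h_0}_n$ with $h_0 = n - g + h^1$. Since $C$ has general moduli and $h^1\ge 2$, the alternative \eqref{eq:BN1} of Theorem \ref{thm:seg1e2}(ii) forces $h_0 \geq 3$, whence $n \geq g + 3 - h^1$; on the other hand, Lemma \ref{lem:maxdeg} applied to $L_0$ yields $n \leq \overline{m}$. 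Hence $n$ satisfies the bounds \eqref{eq:nettu}, and by Theorem \ref{thm:hilschh1}(ii), $\HH_{d,g,h^1}^n$ is an irreducible component of ${\rm Hilb}(d,g,h^1)$.

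Next we verify $[S] \in \HH_{d,g,h^1}^n$. The section $\Gamma$ yields an exact sequence $0 \to N_0 \to \Ff \to L_0 \to 0$ on $C$ with $\deg N_0 = d - n$; the pair $(L_0, N_0)$ together with the extension class $[\Ff]\in {\rm Ext}^1(L_0, N_0)$ defines a point of the parameter space $\mathcal{S}_{h_0,n}$ constructed in Step 1 of the proof of Theorem \ref{thm:hilschh1}, whose image under the map $\psi_{h_0,n}$ is exactly $[S]$. In the range $d \geq 6g - 5$, Proposition \ref{prop:cases} provides the splitting $\Ff = L_0 \oplus N_0$, rendering this identification transparent; in the intermediate range one must argue directly at the level of the extension class.

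Finally, since $n \neq m$, the components $\HH_{d,g,h^1}^m$ and $\HH_{d,g,h^1}^n$ are distinct by Theorem \ref{thm:hilschh1}(ii), their general points parametrizing scrolls whose special sections have different degrees. Therefore $[S]$ lies in $\HH_{d,g,h^1}^m \cap \HH_{d,g,h^1}^n$ and is forced to be a singular point of ${\rm Hilb}(d,g,h^1)$. The main delicate step is the rigorous verification $[S] \in \HH_{d,g,h^1}^n$: one must confirm that $[S]$ truly lies in the image of $\psi_{h_0,n}$, not merely that $S$ shares the qualitative features of a general scroll in $\HH_{d,g,h^1}^n$; for $d<6g-5$ this requires tracking the extension datum $\xi\in \Pp({\rm Ext}^1(L_0,N_0))$ through the construction of $\mathcal{S}_{h_0,n}$.
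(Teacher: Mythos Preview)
Your approach is correct and is exactly the paper's argument: the paper's entire proof is the single sentence ``Such a point $[S]$ also belongs to the component $\HH_{d,g,h^1}^n$.'' Your detailed verification that $n$ lies in the range \eqref{eq:nettu} and your acknowledgment of the subtlety in checking $[S]\in\HH_{d,g,h^1}^n$ are more careful than the paper itself, which takes this membership for granted.
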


\begin{proof} Such a point $[S]$ also belongs to the component $\HH_{d,g,h^1}^n$.
\end{proof}

\begin{remark}\label{rem:16aprb} The existence of such singular points is ensured by
Corollary \ref{cor:ciro53b}.  This implies that $\HH_{d,g,h^1}$ is connected.
\end{remark}



\end{document}